  \newtheorem{theorem}{Theorem}[section]
\newtheorem{df}[theorem]{Definition}
  \newtheorem{lemma}[theorem]{Lemma}
  \newtheorem{corollary}[theorem]{Corollary}
    \newtheorem{conjecture}[theorem]{Conjecture}
\theoremstyle{definition}
 \newtheorem{ex}[theorem]{Example}
\newtheorem{remark}[theorem]{Remark}
\newcommand{\ZZ}{\mathbb{Z}}
\newcommand{\NN}{\mathbb{N}}
\newcommand{\ord}{\mathrm{ord}}
\newcommand{\abr}[1]{\langle#1\rangle}
\begin{document}

\title{Strong Skolem Starters}
\author{
\vspace{0.25in}
Oleg Ogandzhanyants\footnote{oleg.ogandzhanyants@mun.ca} \hspace{1cm} Margarita Kondratieva\footnote{mkondra@mun.ca} \hspace{1cm} Nabil Shalaby\footnote{nshalaby@mun.ca}  \\
 Department of Mathematics and Statistics \\
 Memorial University of Newfoundland \\
 St. John's, Newfoundland \\
 CANADA A1C 5S7
}
 \maketitle
\begin{abstract}

This paper concerns a class of combinatorial objects called Skolem starters, and more specifically, strong Skolem starters, which are generated by Skolem sequences. 

 In 1991, Shalaby conjectured that any additive group $\mathbb{Z}_n$, where $n\equiv1$ or $3\pmod{8},\ n\ge11$, admits a strong Skolem starter and constructed these starters of all admissible orders $11\le n\le57$. Only finitely many strong Skolem starters have been known to date.

In this paper, we offer a geometrical interpretation of strong Skolem starters and explicitly construct infinite families of them. 

\end{abstract}
{\bf Keywords:} strong starter; Skolem sequence; skew starter; cardioid; Room square; Steiner triple system.

\section{Introduction}\label{introduction}

\indent A \textit{starter} $S$ in an additive abelian group $G$ of odd order $n$ is a partition of the set $G^*$ of all non-zero elements of $G$ into $q=(n-1)/2$ pairs $\{\{s_i,t_i\}\}_{i=1}^q$ such that the elements $\pm(s_i-t_i), i=1,...,q$, comprise $G^*$. Starters exist in any additive abelian group of odd order $n\ge3$. For example, the partition $\{\{x,-x\}\mid x\in G,x\ne0\}$ of $G^*$  is a starter in $G$.

Let $\hat{S}=\{s_i+t_i\mid\{s_i,t_i\}\in S\}$. If $\hat{S}\subset G^*$ and $|\hat{S}|=q$, the starter $S$ is called \textit{strong}.

Strong starters were introduced by Mullin and Stanton in 1968 \cite{b01} for constructing Room squares \cite{b016} and, more generally, Howell designs. Recall that a \textit{Room square} of order $2n$ (or of side $2n-1$) is an $(2n-1) \times (2n-1)$ array filled with $2 n$ different symbols in such a way that:\\
1. Each cell of the array is either empty or contains a 2-subset of the set of the symbols;\\
2. Each symbol occurs exactly once in each column and row of the array;\\
3. Every unordered pair of symbols occurs in exactly one cell of the array.
\smallskip

The question of the existence (or non-existence) of a strong starter in an abelian group is crucial in the theory of Room squares. We refer readers interested in constructions of strong starters to \cite{b014}, \cite{b05}, \cite{b01} and the references therein.

Strong starters in groups of order 3, 5 and 9 do not exist \cite[p.144]{b06}. It is an open question whether there exists a strong starter in every cyclic group of an odd order exceeding 9. In 1981, Dinitz and Stinson \cite{b07} found (by a computer search) strong starters in groups of order $n$ for all odd $7\le n\le999,\ n\ne 9$.

At present, the strongest known general statement on the existence of strong starters is the following \cite[p.625]{b014}: \textit{For any $n>5$ coprime to 6, an abelian group of order n admits a strong starter.}
\smallskip

Skolem starters, which we will be concerned with, are defined only in additive groups $\mathbb{Z}_n$ of integers modulo $n$. We refer to starters in $\ZZ_n$ as starters of order $n$.
\smallskip

A \textit{Skolem sequence} of order $q$ is a sequence $(x_1,x_2,...,x_{2q})$ of length $2q$ in which each number $k\in\{1,...,q\}$ appears exactly twice, so that if $x_i=x_j=k$ then $j-i=k,\ 1\le i<j\le2q$.  Skolem sequences exist iff $q\equiv0$ or $1\pmod{4}$ \cite{b014}. They were originally used by Skolem in 1957 for the construction of Steiner triple systems \cite{b02}.

Given a Skolem sequence $(x_1,x_2,...,x_{2q})$, consider all pairs $\{i_k,j_k\}$ such that $j_k>i_k$ and $x_{i_k}=x_{j_k}=k,\ k=1,...,q$. Clearly, this set of pairs forms a partition of the set $\mathbb{Z}_n^*$ of all non-zero elements of $\ZZ_n$, where $n=2q+1$. Since $j_k-i_k\equiv k\pmod{n}$, (and consequently, $i_k-j_k\equiv -k\pmod{n}),\ k=1,...,q$, this set of pairs is a starter in $\mathbb{Z}_n$. 

\begin{ex}\label{skolem sequence} Sequence $(4,1,1,3,4,2,3,2)$ is a Skolem sequence of order $4$: the length of the sequence is $2\cdot4=8$, and $x_2=x_3=1,x_6=x_8=2,x_4=x_7=3,x_1=x_5=4$, so $1$'s, $2$'s, $3$'s and $4$'s are one, two, three and four positions apart, respectively.  This Skolem sequence yields a starter $T=\{\{2,3\},\{6,8\},\{4,7\},\{1,5\}\}$ in $\mathbb{Z}_9$.
\end{ex}

\begin{df}Let $n=2q+1$, and $1<2<...<2q$ be the order of the non-zero integers modulo n. A starter in $\mathbb{Z}_n$ is Skolem if it can be written as a set of ordered pairs $\{\{s_i,t_i\}\}_{i=1}^q$, where $t_i-s_i\equiv i\pmod{n}$ and $t_i>s_i,\ 1\le i\le q$.
\end{df}
Skolem starters in $\mathbb{Z}_n$ are in one-to-one correspondence with Skolem sequences of order $q=(n-1)/2$. Thus, Skolem starters exist in $\mathbb{Z}_n$ iff $n\equiv1$ or $3\pmod{8}$.

\begin{df}A starter which is both Skolem and strong is called a \textit{strong Skolem starter}.
\end{df}
 \begin{theorem}\label{Shalaby theorem} $($Shalaby, $1991\ \cite[pp.60-62]{b09}.)$ For $11\le n\le 57, n\equiv1$ or $3\pmod{8}$, $\mathbb{Z}_n$ admits a strong Skolem starter.
\end{theorem}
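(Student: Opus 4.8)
The plan is to treat this as a finite verification, since the hypothesis $11 \le n \le 57$ with $n \equiv 1$ or $3 \pmod 8$ singles out only the twelve orders
\[
n \in \{11, 17, 19, 25, 27, 33, 35, 41, 43, 49, 51, 57\}.
\]
For each such $n$ I would exhibit a concrete Skolem sequence of order $q = (n-1)/2$ whose induced starter is strong, and then verify the defining conditions directly. Because the one-to-one correspondence between Skolem sequences and Skolem starters of order $n$ has already been established above, producing the sequence and checking strongness is all that remains.

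First I would isolate exactly what must be checked. Given a Skolem sequence of order $q$, the pairs $\{i_k, j_k\}$ with $x_{i_k} = x_{j_k} = k$ and $j_k > i_k$ satisfy $j_k - i_k = k$, so the differences $\pm k$ automatically exhaust $\mathbb{Z}_n^*$ and we always obtain a Skolem starter. The only extra requirement for strongness is that the $q$ sums $i_k + j_k$ be pairwise distinct modulo $n$ and all nonzero; equivalently, the map $k \mapsto i_k + j_k \pmod{n}$ must be an injection of $\{1,\dots,q\}$ into $\mathbb{Z}_n^*$. Thus for each $n$ the task reduces to producing one Skolem sequence for which these $q$ sums form a set of distinct nonzero residues, after which the verification is a routine tabulation of the pairs $\{\{s_i,t_i\}\}$ together with the set $\hat{S} = \{s_i + t_i\}$.

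To locate such sequences I would run a search over the Skolem sequences of each order $q$ (greedy backtracking on the placements of $q, q-1, \dots, 1$ works well, and this is presumably how Shalaby's original tables were obtained), retaining the first sequence whose sum-sequence passes the distinctness-and-nonzero test. The hard part is not the verification but the existence of a suitable sequence: the differences are rigidly pinned to $1, 2, \dots, q$, so the sums are the only free data, and a generic Skolem sequence will exhibit repeated sums. One must therefore hunt, among the many Skolem sequences of each admissible order, for one that happens to spread its sums across distinct residue classes. Once found, each case is closed by displaying the $q$ pairs and confirming that the corresponding $q$ sums are distinct and avoid $0$ modulo $n$, a finite and mechanical check that I would carry out order by order.
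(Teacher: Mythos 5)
Your approach is exactly the paper's: the published proof is nothing more than a pointer to Appendix~A, which lists, for each admissible order $11\le n\le 57$ (i.e., for each $q\in\{5,8,9,12,13,16,17,20,21,24,25,28\}$), an explicit Skolem sequence whose induced starter is strong. Your enumeration of the twelve orders is right, and so is your reduction of strongness to a single checkable condition: the map $k\mapsto i_k+j_k \pmod n$ must be an injection of $\{1,\dots,q\}$ into $\mathbb{Z}_n^*$, the differences being automatic.

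The gap is that you never produce the sequences. For a statement of this kind --- a finite conjunction of existence claims --- the explicit certificates \emph{are} the proof; that is precisely what Appendix~A supplies and what Shalaby's thesis contains. Describing a backtracking search and ``retaining the first sequence whose sum-sequence passes the distinctness-and-nonzero test'' establishes nothing unless you either exhibit the twelve sequences the search found, or prove a priori that the search must succeed at every one of the twelve orders; you do neither, and you concede the point yourself when you write that the hard part is the existence of a suitable sequence, not the verification. As written, the proposal is a correct plan for generating a proof rather than a proof. To close it, append the data, order by order: e.g., for $q=5$ the sequence $(5,2,4,2,3,5,4,3,1,1)$ gives pairs $\{9,10\},\{2,4\},\{5,8\},\{3,7\},\{1,6\}$ with sums $8,6,2,10,7 \pmod{11}$, which are distinct and nonzero, so the starter is strong; the remaining eleven orders require the analogous exhibits, which is exactly the content of the paper's Appendix~A.
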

\begin{proof} The Skolem sequences, giving rise to strong Skolem starters of orders $11\le n\le 57,\ n\equiv1$ or $3\pmod{8}$, are presented\footnote{A strong Skolem starter of order 59, the next consecutive order after 57, was constructed by the means described in Lemma \ref{group of units}. We present this starter in the end of the list of those originally found by Shalaby to prove Theorem \ref{Shalaby theorem}.} in Appendix A.
\end{proof}
\begin{conjecture}\label{Shalaby conjecture} $($Shalaby, $1991\ \cite[p. 62]{b09}.)$  
Any $\mathbb{Z}_n,\  n\equiv 1$ or $3\pmod{8},\ n\ge11,$ admits a strong  Skolem starter.
\end{conjecture}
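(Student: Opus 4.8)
The plan is to recast the problem so that the rigid Skolem condition and the delicate strong condition can be attacked separately and then reconciled. Writing a candidate starter as pairs $\{s_i,t_i\}$ with $t_i-s_i\equiv i\pmod n$, I first observe that the difference condition defining a starter is automatic once the pairs partition $\mathbb{Z}_n^*$, since the differences are then $\pm1,\dots,\pm q$, which exhaust $\mathbb{Z}_n^*$. Hence a Skolem starter is strong precisely when the $q$ sums $\sigma_i:=s_i+t_i$ are distinct and nonzero. Because $n$ is odd, $2$ is invertible, so each pair is recovered from its sum via $s_i=2^{-1}(\sigma_i-i)$ and $t_i=2^{-1}(\sigma_i+i)$. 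The whole problem then becomes: choose a sum-sequence $(\sigma_1,\dots,\sigma_q)$ so that (a) the $2q$ values $2^{-1}(\sigma_i\pm i)$ are exactly $\mathbb{Z}_n^*$, (b) the $\sigma_i$ are distinct and nonzero, and (c) the induced integer representatives satisfy the Skolem ordering $t_i>s_i$ with integer difference exactly $i$ in $\{1,\dots,2q\}$. This is the geometric viewpoint: the map $i\mapsto(s_i,t_i)$ traces a discrete curve, and I would look for a closed-form parametrization (the cardioid family) that makes (a) and (b) transparent.

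Second, I would produce infinite families for prime orders using the multiplicative group of units. For $n=p$ prime with $p\equiv1,3\pmod 8$, fix a primitive root $g$ and try a sum-sequence of affine or multiplicative shape, e.g.\ $\sigma_i\equiv ai+b$ with $\gcd(a,p)=1$, or $\sigma_i\equiv g^{c(i)}$. An affine choice makes (b) immediate and reduces (a) to checking that two arithmetic progressions tile $\mathbb{Z}_p^*$; a multiplicative choice reduces strongness to the injectivity of a power or character map, which I expect to follow from a nonvanishing estimate (a coprimality or character-sum bound) valid for all but finitely many $p$. This is presumably the engine behind the units-group lemma invoked in the footnote to build order $59$, and it is the natural source of one infinite family.

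Third, I would try to propagate solutions from small orders to composite and larger orders by a recursive or product construction: given strong Skolem starters for coprime $n_1,n_2$, assemble one for $n_1n_2$ via the Chinese Remainder Theorem applied to the sum-sequences, so that distinctness of sums and the difference condition are inherited coordinatewise. Combined with Theorem \ref{Shalaby theorem} as a stock of verified small base orders, such a theorem would cover an infinite and structured set of admissible $n$.

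The hard part will be condition (c), the genuine Skolem ordering. The affine and multiplicative constructions that tame the strong condition are tailored to the additive or multiplicative group structure modulo $n$ and are blind to the linear order and interval structure of $\{1,\dots,2q\}$ that a Skolem sequence demands; likewise the CRT product respects sums and differences modulo $n_1n_2$ but scrambles the integer magnitudes, so it need not carry Skolem sequences to Skolem sequences. Reconciling the two — exhibiting a single parametrized family whose representatives automatically land in the right order, or proving a product theorem that preserves Skolemness as well as strongness — is the crux. I would therefore spend most of the effort on (c): within the cardioid parametrization I would pin down an explicit residue-class description of the intended integer representatives $s_i,t_i$ and prove, by a direct if intricate interval argument, that $t_i-s_i=i$ holds exactly for every $i$ and for each admissible residue of $n$ modulo $8$.
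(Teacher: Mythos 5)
The statement you were asked to prove is a \emph{conjecture}: the paper does not prove it, and explicitly says so (``While this question remains open, we managed to partly confirm Shalaby's Conjecture\ldots''). What the paper actually establishes is a partial confirmation: it constructs infinite families of strong Skolem starters of a special ``cardioidal'' shape --- all pairs of the form $\{x,2x \pmod{n}\}$ --- and proves (Theorems \ref{major theorem} and \ref{final theorem}) that such starters exist \emph{exactly} when $n\in\overline{C_2\setminus\{3\}}$, i.e.\ when every prime factor $p$ of $n$ satisfies $\ord_p(2)\equiv2\pmod 4$ and $3\nmid n$. This is an infinite but proper subset of the admissible orders $n\equiv1,3\pmod 8$. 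Your text is likewise not a proof: it is a research plan whose decisive step --- the Skolem ordering condition you call (c) --- is explicitly left unresolved (``I would therefore spend most of the effort on (c)''). So, judged as a proof of Conjecture \ref{Shalaby conjecture}, it has a fatal gap; the problem is in fact still open.

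Beyond that, two of your concrete steps would fail even if pushed through. First, the affine sum-sequence: for a pair $\{x,2x\}$ the sum is $3$ times the difference, so your choice $\sigma_i\equiv ai+b$ with $a=3$, $b=0$ is precisely the paper's cardioidal construction --- and the paper proves it \emph{cannot} cover all admissible orders. For example, $17\equiv1\pmod 8$ is admissible, yet $\ord_{17}(2)=8\equiv0\pmod 4$, so $17\notin C_2$ and no cardioidal starter of order $17$ exists; by the paper's Remark, $P_2$ begins at $281$, so \emph{every} admissible prime $p\equiv1\pmod 8$ below $281$ ($17,41,73,89,97,\ldots$) is excluded. The obstruction (second half of the proof of Theorem \ref{major theorem}) is structural: when $-1$ is an even power of $2$ modulo $p$, the only two cardioidal partitions of $\langle 2\rangle_p$ generate as differences only the even, respectively only the odd, powers of $2$, so neither covers $\langle 2\rangle_p$. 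Hence your hope that a multiplicative engine handles ``all but finitely many $p$'' is unattainable in this form, and your more general sum-sequences $\sigma_i\equiv g^{c(i)}$ come with no argument at all. Second, the CRT product: as you yourself concede, it scrambles integer magnitudes and need not preserve Skolemness; note also that the paper's composite-order construction is \emph{not} a product of starters but a decomposition of $\ZZ_n^*$ into divisor classes $a_i\underline{G}_{b_i}$, each partitioned separately via Lemma \ref{group of units}, and even that works only under the $\overline{C_2}$ restriction. The honest conclusion is that your outline, if completed, would at best rediscover the paper's partial result (Theorem \ref{final theorem}); orders such as $17$, or any $n$ divisible by a prime $p$ with $\ord_p(2)\not\equiv2\pmod 4$, require genuinely different ideas, and for them the conjecture remains open.
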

The value of strong Skolem starters of order $2q+1$ is in their applicability in constructing Room squares of order $2q+2$  on one hand, and \textit{Steiner triple systems}, STS($6q+1$), on the other. Recall that an STS($v$) is a collection of $3$-subsets, called $blocks$, of a $v$-set $S$, such that every two elements of $S$ occur together in exactly one of the blocks.

Example \ref{RS+STS construction} illustrates the use of strong Skolem starters.

\begin{ex}\label{RS+STS construction}
Let $S=\{\{s_i,t_i\}\}_{i=1}^{5}=\{\{1,2\},\{7,9\},\{3,6\},\{4,8\},\{5,10\}\}$. It is a strong Skolem starter in $\ZZ_{11}:\ \hat{S}=\{1+2,7+9,3+6,4+8,5+10\pmod{11}\}=\{3,5,9,1,4\}\subset\ZZ_{11}^*,\ |\hat{S}|=5$.
\begin{itemize}
\item A Room square of order $12$ constructed out of $S$ $($its pairs appear in the first row$)$:

\begin{tabular}{|c|c|c|c|c|c|c|c|c|c|c|}
    \hline
    $\infty,0$ &4,8&-&1,2&5,10&7,9& -&-&- &3,6&- \\ 
\hline
 -&$\infty,1$&5,9&- &2,3&6,0 &8,10&- & -&-&4,7 \\ 
\hline 
5,8 &- & $\infty,2$ & 6,10 &- & 3,4 & 7,1 & 9,0& -& - &-\\
    \hline
-&6,9&-&$\infty,3$&7,0&-&4,5&8,2 &10,1 & -& - \\
\hline
-&-&7,10&-&$\infty,4$&8,1&- &5,6& 9,3& 0,2&-  \\
\hline
-&-&-&8,0&-&$\infty,5$&9,2&- &6,7 &10,4& 1,3  \\
\hline
2,4&-&-&-&9,1&-&$\infty,6$& 10,3&- &7,8&0,5\\
\hline
1,6&3,5&-&-&-&10,2&-&$\infty,7$& 0,4&- & 8,9\\
\hline
9,10&2,7&4,6&-&-&-&0,3&-&$\infty,8$& 1,5&- \\
\hline
-&10,0&3,8&5,7&-&-&-&1,4&-&$\infty,9$& 2,6\\
\hline
 3,7&-&0,1&4,9&6,8&-&-&-&2,5&-&$\infty,10$ \\
\hline
  \end{tabular}
\item The translates of the base blocks $\{0,i,t_i+5\},\ 1\le i\le5$, obtained from $S$, generate  an STS$(31):$
\begin{center}
\begin{tabular}{cccccc}
$\{0,1,7\}$&$\{0,2,14\}$&$\{0,3,11\}$&$\{ 0,4,13\}$&$\{0,5,15\}$&$\pmod{31}$.\\
\end{tabular}
\end{center}
\end{itemize}
\end{ex}
In Section \ref{geometry}, we give a geometrical interpretation of strong Skolem starters. In Section \ref{cardioidal starters intro}, we introduce an important subset of Skolem starters called cardioidal starters, and prove some statements about their basic properties. In Section \ref{constructing}, we find the necessary and sufficient conditions for the existence of a cardioidal starter of order $n$ (Theorem \ref{major theorem}), present the way to construct cardioidal starters (Lemma \ref{group of units} and Theorem \ref{major theorem}), and find the necessary and sufficient conditions for the existence of strong cardioidal starters (Theorem \ref{final theorem}). Explicit construction of infinite families of strong cardioidal (and hence, strong Skolem) starters supports Conjecture \ref{Shalaby conjecture} and is the main result of the paper. In Section \ref{order divisible by 3}, we briefly discuss cardioidal starters which are not strong, and articulate one of possible ways of using a strong cardioidal starter of order $n$ to construct strong Skolem starters of order $3n$.
\section{Geometrical interpretation of strong Skolem starters}\label{geometry}

J. Dinitz, in his PhD thesis, proposed a geometrical interpretation \cite {b07} of a strong starter in $\mathbb{Z}_n$ (see Figure \ref{fig:Dinitz}). The following theorem is based on that interpretation:

\begin{theorem}\label{Dinitz theorem}Let $q=(n-1)/2$ and $S=\{x_i,y_i\}_{i=1}^q$ be a strong starter in $\ZZ_n$. Label $n$ equally spaced points on a circle by the elements of $\mathbb{Z}_n\  (cyclically)$. If $\{x,y\}\in S$, then join points $x$ and $ y$ on the circle by a straight line $($a chord$)$. The set $C=\{[x_i,y_i]\}_{i=1}^q$ of $q$ chords thus formed will have the following\footnote{However only  properties $(\bf{a})$--$(\bf{c})$ were mentioned in \cite{b07}, the set of pairs corresponding to a collection of chords not satisfying $(\bf{e})$ is not a starter. Also, a set of chords satisfying $(\bf{a})$--$(\bf{c})$ and $(\bf{e})$ but violating $(\bf{d})$  is not strong. The starter $\{1,2\}$ in $\ZZ_3$ is a clear example of such a starter. A less trivial one is the following starter in $\mathbb{Z}_{21}$: $\{18,19\},\{1,3\},\{20,2\},\{10,14\},\{8,13\},\{6,12\},\{4,11\},\{9,17\},\{7,16\},\{5,15\}$. Here the chord $[8,13]$ violates $(\bf{d})$. } properties:\\
$(\bf{a})$ no two chords have the same length;\\
$(\bf{b})$ no two chords are parallel;\\
$(\bf{c})$ no two chords share a common endpoint;\\
$(\bf{d})$ the perpendicular bisector of any chord does not pass through point $0$ on the circle\footnote{ In particular, property $(\bf{d})$ of Theorem \ref{Dinitz theorem} means that no chord in Figure \ref{fig:Dinitz} is horizontal.};\\
$(\bf{e})$ point $0$ is not an endpoint of any chord.\\
Conversely, any such a geometric configuration of q chords generates a strong starter in $\mathbb{Z}_n$.
\end{theorem}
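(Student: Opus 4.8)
The plan is to fix coordinates on the circle and translate each of the five geometric properties into an arithmetic condition on the pairs, after which both directions of the equivalence are pure bookkeeping. Place point $k$ at $P_k=(\cos(2\pi k/n),\sin(2\pi k/n))$ for $k\in\ZZ_n$ (the precise position of $P_0$ is immaterial to the algebra). I would first record three elementary facts, each a one-line trigonometric computation. First, the chord $[x,y]$ has length $2\sin(\pi d/n)$, where $d=\min(\overline{x-y},\,n-\overline{x-y})\in\{1,\dots,q\}$ is the circular distance and $\overline{x-y}$ is the residue of $x-y$ in $\{0,\dots,n-1\}$; since $t\mapsto 2\sin(\pi t/n)$ is strictly increasing on $[1,q]$, two chords have equal length iff they have equal circular distance, i.e. iff $x_1-y_1\equiv\pm(x_2-y_2)\pmod n$. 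Second, the sum-to-product identities give $P_x+P_y=2\cos(\pi(x-y)/n)\,(\cos(\pi(x+y)/n),\sin(\pi(x+y)/n))$; as $n$ is odd the factor never vanishes, so the chord's midpoint lies on a well-defined central ray, and the perpendicular bisector of $[x,y]$ is exactly the line through the center at angle $\pi(x+y)/n\pmod\pi$. Hence two chords are parallel iff their bisectors coincide, i.e. iff $x_1+y_1\equiv x_2+y_2\pmod n$. Third, that bisector passes through $P_0$ iff its angle is $\equiv 0\pmod\pi$, i.e. iff $x+y\equiv 0\pmod n$.

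With these three dictionaries in hand, the forward direction is immediate. Let $S=\{x_i,y_i\}_{i=1}^q$ be a strong starter. Since the pairs partition $G^*=\ZZ_n^*$, no endpoint equals $0$, giving (e), and no value repeats, giving (c). Since the differences $\pm(x_i-y_i)$ exhaust $G^*$, the $q$ circular distances $d_i$ are pairwise distinct, so by the first fact the chord lengths are distinct, giving (a). Because $|\hat S|=q$ the sums $x_i+y_i$ are distinct mod $n$, so by the second fact no two chords are parallel, giving (b); and because $\hat S\subset G^*$ every sum is nonzero mod $n$, so by the third fact no bisector meets $P_0$, giving (d).

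For the converse, suppose the $q$ chords satisfy (a)--(e). By (e) and (c) the $2q$ endpoints are distinct and nonzero, hence are precisely the elements of $G^*$, so $S=\{x_i,y_i\}$ partitions $G^*$. By (a) the $q$ circular distances are distinct, and since they lie in the $q$-element set $\{1,\dots,q\}$ they must be a permutation of it; thus the $2q$ differences $\{\pm(x_i-y_i)\}$ are exactly $G^*$, and $S$ is a starter. Finally (b) forces the $q$ sums $x_i+y_i$ to be distinct mod $n$, so $|\hat S|=q$, and (d) forces each to be nonzero, so $\hat S\subset G^*$; hence $S$ is strong.

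The main obstacle is the second fact: establishing rigorously that the perpendicular bisector of $[x,y]$ is precisely the central line at angle $\pi(x+y)/n$, on which both the parallelism test (b) and the ``through $P_0$'' test (d) rest. Everything else reduces to the sum-to-product simplification together with a pigeonhole count (distinct distances drawn from a $q$-element set must be all of $\{1,\dots,q\}$) -- which is where the converse quietly uses that there are exactly $q$ chords.
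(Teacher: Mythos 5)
Your proposal is correct and takes essentially the same route as the paper's proof: both translate the geometric properties $(\bf{a})$--$(\bf{e})$ into arithmetic conditions on the differences $x_i-y_i$ and sums $x_i+y_i$ modulo $n$ (lengths $\leftrightarrow$ differences up to sign, parallelism and bisector-through-$0$ $\leftrightarrow$ sums), derive the forward direction from the starter and strong conditions, and reverse the dictionary for the converse. Your write-up simply makes explicit the trigonometric identities and the pigeonhole count that the paper leaves implicit in its phrase ``by reversing the chain of reasonings.''
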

\begin{proof}
 Let $S=\{x_i,y_i\}_{i=1}^q$ be a strong starter in $\ZZ_n$. Since $S$ is a starter, for any $1\le i<j\le q$, we have: $x_i-y_i\not\equiv\pm(x_j-y_j)\pmod{n}$. Hence, property $(\bf{a})$ holds. Also, $\cup_{i=1}^q\{x_i,y_i\}=\ZZ^*_n$. Hence, properties $(\bf{c})$ and $(\bf{e})$ hold. Since $S$ is strong, for any $1\le i<j\le q$, we have: $x_i+y_i\not\equiv x_j+y_j\pmod{n}$. Then $x_i-x_j\not\equiv -(y_i-y_j)\pmod{n}$. That means that the midpoints of the arcs $\stackrel{\frown}{x_iy_i}$ and $\stackrel {\frown}{x_jy_j}$ do not lie on the same diameter of the circle. Hence, property $(\bf{b})$ holds. Finally, since $S$ is strong, $x_i+y_i\not\equiv0\pmod{n},\ 1\le i\le q$. Therefore, property $(\bf{d})$ holds. That proves the direct statement.
\smallskip

Now, consider a set $C$ of $q$ chords connecting $n$ equally spaced points on a circle, and satisfying properties $(\bf{a})$--$(\bf{e})$. Then there is the only point, 0, which is not an endpoint of any of the $q$ chords. Label the $n-1$ remaining points cyclically by elements of $\ZZ_n$. Let $S$ be the following partition of $\ZZ^*_n:\ S=\{\{x,y\}|\ [x,y]\in C\}$. By reversing the chain of reasonings shown in part 1, we conclude that properties $(\bf{a})$, $(\bf{c})$ and $(\bf{e})$ imply that $S$ is a starter in $\mathbb{Z}_n$. Properties $(\bf{b})$ and $(\bf{d})$ imply that $S$ is strong. 
\end{proof}

\begin{figure}[htbp]
  \centering
  \includegraphics[width=0.6\textwidth,trim={5cm 4.8cm 8.5cm 16.3cm},clip]{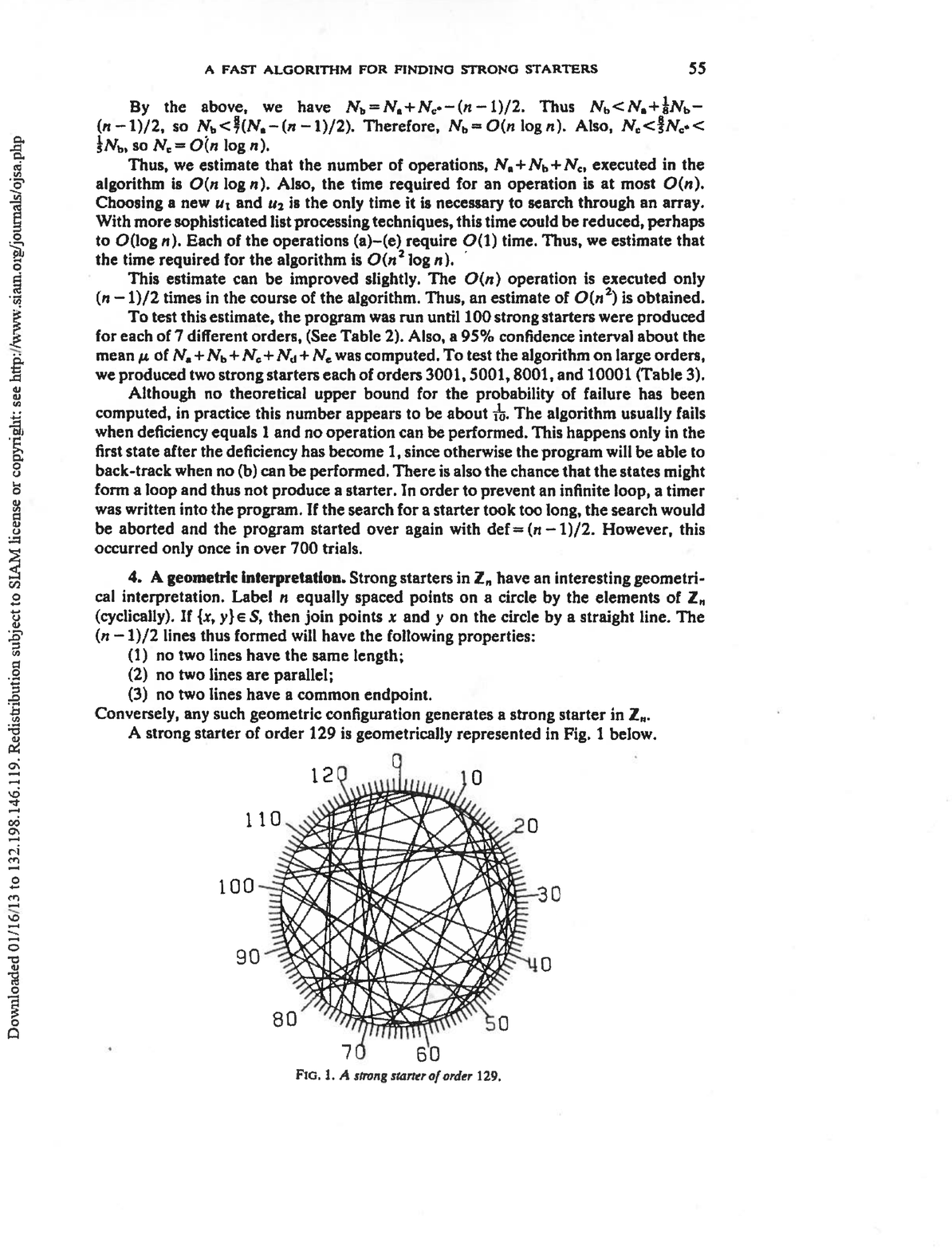}
 \caption{A strong starter of order 129, as it was interpreted by Dinitz.}
\label{fig:Dinitz}
\end{figure}

 Consider the same circle with $n$ labelled points as described above. Let us call the segment between the point 0 and the center of the circle the 0-\textit{radius}. Then impose one more restriction on the set of chords in addition to $(\bf{a})$--$(\bf{e})$:\\
\textit{$(\bf{f})$ no chord of the set $C$ crosses the $0$-radius.}

\begin{theorem}\label{conditions} Let S be a strong Skolem strater in $\mathbb{Z}_n$ and let $C$ be the set of chords representing $S$ as described in Theorem \ref{Dinitz theorem}. Then the set $C$ satisfies properties $(\bf{a})$--$(\bf{e})$ from Theorem \ref{Dinitz theorem} along with property $(\bf{f})$.

Conversely, any such a geometric configuration of $(n-1)/2$ chords generates a strong Skolem starter in $\mathbb{Z}_n$.
\end{theorem}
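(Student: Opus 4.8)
The plan is to observe that the two directions of this statement reduce almost entirely to Theorem \ref{Dinitz theorem}, with property (f) carrying the single extra piece of information that distinguishes the Skolem property among strong starters. Since a strong Skolem starter is in particular a strong starter, and conversely a configuration satisfying (a)--(f) certainly satisfies (a)--(e), Theorem \ref{Dinitz theorem} already supplies the equivalence between (a)--(e) and ``strong starter'' in both directions. Thus I would isolate the genuinely new claim: given that $C$ represents a starter $S$ in $\ZZ_n$, property (f) holds for every chord of $C$ if and only if $S$ is Skolem. Everything then follows by combining this with Theorem \ref{Dinitz theorem}.

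The first key step is a purely geometric lemma describing when a chord crosses the $0$-radius. I claim that a chord $[x,y]$ with $x,y\in\ZZ_n^*$ (so that neither endpoint is $0$, guaranteed by (e)) crosses the $0$-radius if and only if the point $0$ lies on the shorter of the two arcs cut off by $x$ and $y$. To prove this I would use the standard separation argument: because $n$ is odd, no two of the $n$ points are antipodal, so no chord is a diameter and the two arcs have well-defined unequal lengths; the center $O$ of the circle then lies strictly in the circular segment bounded by the longer arc. A boundary point $P$ is separated from $O$ by the line of the chord exactly when $P$ lies on the shorter arc, and, both $O$ and $P$ lying in the disk, the segment $OP$ meets the line of the chord precisely inside the disk, i.e. on the chord itself. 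Applying this with $P=0$ gives the claim.

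The second key step translates the Skolem condition into an arc-length statement. Writing a pair as $\{a,b\}$ with $1\le a<b\le 2q$, the definition of a Skolem starter forces the orientation $(s,t)=(a,b)$, since $t>s$ together with $t-s\equiv i\pmod n$ and $1\le i\le q$ (and $0<t-s\le 2q-1<n$) give $t-s=i=b-a$; this is admissible exactly when $b-a\le q$. Now the increasing arc from $a$ to $b$ spans $b-a$ steps and contains only labels in $\{a,\dots,b\}\subseteq\{1,\dots,2q\}$, hence never contains $0$; it is the shorter arc iff $b-a\le q$. Therefore $b-a\le q$ iff $0$ lies on the longer arc iff (by the geometric lemma) the chord $[a,b]$ does not cross the $0$-radius. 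Since in any starter the $q$ difference classes are automatically a permutation of $\{1,\dots,q\}$, the condition ``$b-a\le q$ for every pair'' is equivalent to $S$ being Skolem; equivalently, property (f) for all chords of $C$ is equivalent to $S$ being Skolem.

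The main obstacle I anticipate is not arithmetic but the careful justification of the crossing lemma: pinning down that the center sits on the major-arc side, that the intersection of the chord's line with the $0$-radius actually lands on the chord rather than on its extension, and that the odd order of the group removes the degenerate diameter case. Once that lemma is in place, the equivalence between (f) and the Skolem property is a short arc-length computation, and both directions of Theorem \ref{conditions} follow immediately by adjoining it to the equivalence between (a)--(e) and strongness established in Theorem \ref{Dinitz theorem}.
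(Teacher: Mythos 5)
Your proposal is correct and follows essentially the same route as the paper: both reduce the theorem to Theorem \ref{Dinitz theorem} for properties $(\bf{a})$--$(\bf{e})$ and then identify property $(\bf{f})$ with the Skolem condition $t_i - s_i = i \le q$ for each pair. The only difference is one of rigor, not of approach: the paper simply asserts the equivalence between ``$y - x \le q$'' and ``the chord $[x,y]$ does not cross the $0$-radius,'' whereas you supply the careful justification (the convexity/separation argument showing a chord crosses the $0$-radius exactly when $0$ lies on its shorter arc, plus the arc-counting step), which is a welcome filling-in of a detail the paper treats as geometrically evident.
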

\begin{proof}
 Let us order the elements of $\mathbb{Z}_n^*$: $1<...<n-1=2q$. Since $S$ is Skolem, then for any $\{x,y\}\in S$, given $y>x$, there holds $y-x\le q$. Hence, property $(\bf{f})$ holds. Conversely, by Theorem \ref{Dinitz theorem}, a set of chords satisfying properties $(\bf{a})$--$(\bf{e})$, generates a strong starter $\{\{s_i,t_i\}\mid t_i-s_i\equiv i\pmod{n},\ 1\le i\le q\}$. By $(\bf{f})$, no chord crosses the 0-radius. Therefore, $\ t_i>s_i,\ 1\le i\le q$. Hence, the strong starter is Skolem.
\end{proof}
\begin{remark}

Properties $(\bf{a}),(c),(e)$ and $(\bf{f})$ present a geometrical interpretaion of a Skolem starter. In addition, properties $(\bf{b})$ and $(\bf{d})$ imply that the starter is strong Skolem.
\end{remark}

\section{Cardioidal starters as a subset of Skolem starters}\label{cardioidal starters intro}\label{cardioidal subset}
Property $(\bf{f})$ in Theorem \ref{conditions} motivates us to seek Skolem starters of a special type to ensure that the correspoding chords do not cross the 0-radius. One possible approach is to pick chords, satisfying properties $(\bf{a})$--$(\bf{e})$, out of a bigger collection of chords tangent to a curve whose tangents never cross the 0-radius. 

Let us consider a unit circle $x^2+y^2=1$ in a Cartesian coordinate plane with a point labelled 0 at $(x,y)=(0,1)$. Let us associate every point on the circle with the clockwise arclength $\theta\pmod{2\pi}$ between the point 0 and this point. 

\begin{lemma}\label{caustic} The envelope of the family of chords $[\theta,2\theta\pmod{2\pi}]$ in the circle described above is a cardioid\footnote{More generally, the envelope of chords $[\theta,a\theta],\ a\in\mathbb{N}\setminus\{1\}$ is an epicycloid with $a-1$ cusps \cite{b012}.} with vertex at $(0,1)$ and cusp at $(0,-1/3)$.
\end{lemma}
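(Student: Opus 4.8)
The plan is to compute the envelope directly from the one-parameter family of chord equations and then recognize the resulting parametric curve as a cardioid. First I would fix coordinates: since the point labelled $0$ sits at $(0,1)$ and arclength is measured clockwise, the point with parameter $\theta$ is $P(\theta)=(\sin\theta,\cos\theta)$, so that $P(0)=(0,1)$ and increasing $\theta$ moves clockwise. The chord $[\theta,2\theta]$ joins $P(\theta)$ and $P(2\theta)$; applying the sum-to-product identities to the direction vector $P(2\theta)-P(\theta)$ shows its unit normal is $(\sin\tfrac{3\theta}{2},\cos\tfrac{3\theta}{2})$, so the chord lies on the line
\[
F(x,y,\theta):=x\sin\tfrac{3\theta}{2}+y\cos\tfrac{3\theta}{2}-\cos\tfrac{\theta}{2}=0 .
\]

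Next I would invoke the standard envelope criterion, solving $F=0$ together with $\partial F/\partial\theta=0$ simultaneously for $(x,y)$. The second equation reduces to $3\left(x\cos\tfrac{3\theta}{2}-y\sin\tfrac{3\theta}{2}\right)+\sin\tfrac{\theta}{2}=0$, so the pair is a linear system whose coefficient matrix is an orthogonal reflection (determinant $-1$), hence trivially invertible. Carrying out the inversion and collapsing the products via the product-to-sum formulas yields the explicit parametrization
\[
x(\theta)=\tfrac{2}{3}\sin\theta+\tfrac{1}{3}\sin 2\theta,\qquad
y(\theta)=\tfrac{2}{3}\cos\theta+\tfrac{1}{3}\cos 2\theta .
\]

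To identify this as a cardioid I would translate the cusp to the origin via $Y=y+\tfrac{1}{3}$ and use the double-angle identities to factor out $(1+\cos\theta)$, obtaining $x=\tfrac{2}{3}(1+\cos\theta)\sin\theta$ and $Y=\tfrac{2}{3}(1+\cos\theta)\cos\theta$. Passing to polar coordinates (angle measured from the positive $Y$-axis) then gives $r=\sqrt{x^2+Y^2}=\tfrac{2}{3}(1+\cos\theta)$, the canonical polar equation of a cardioid with parameter $a=\tfrac{2}{3}$. Finally I would confirm the two distinguished points: $\theta=0$ yields the vertex $(0,1)$, while $\theta=\pi$ yields $(0,-\tfrac{1}{3})$, and checking $x'(\pi)=y'(\pi)=0$ verifies that the latter is a genuine cusp rather than a smooth point.

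The computation is essentially mechanical once the chord equation is in hand, so the only points requiring care are, first, getting the orientation and parametrization conventions right so that $0$ truly sits at the vertex and the chords run clockwise, and second, the trigonometric bookkeeping in passing from the solved linear system to the factored $(1+\cos\theta)$ form. I expect this second step --- recognizing the post-inversion expressions as a cardioid rather than some other closed curve --- to be the main obstacle, though it dissolves at once after the translation of the cusp to the origin.
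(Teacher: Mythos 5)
Your proof is correct, but it takes a genuinely different route from the paper: the paper gives no computation at all, disposing of the lemma by citing it as a classical fact (Beardon--Beardon on circles, chords and epicycloids; Lawrence's catalog of plane curves) and by invoking the optical interpretation that the chords $[\theta,2\theta]$ are the first-reflection rays of a circular mirror with light source at the point $0$, whose caustic is a cardioid. You instead give a self-contained analytic derivation, and it checks out: the chord-line equation $x\sin\tfrac{3\theta}{2}+y\cos\tfrac{3\theta}{2}=\cos\tfrac{\theta}{2}$ is indeed satisfied by both $(\sin\theta,\cos\theta)$ and $(\sin 2\theta,\cos 2\theta)$; the envelope system $F=0$, $\partial F/\partial\theta=0$ solves, via the reflection matrix being its own inverse, to $x=\tfrac{2}{3}\sin\theta+\tfrac{1}{3}\sin 2\theta$, $y=\tfrac{2}{3}\cos\theta+\tfrac{1}{3}\cos 2\theta$; and the shift $Y=y+\tfrac{1}{3}$ factors this as $\bigl(\tfrac{2}{3}(1+\cos\theta)\sin\theta,\ \tfrac{2}{3}(1+\cos\theta)\cos\theta\bigr)$, i.e.\ the polar cardioid $r=\tfrac{2}{3}(1+\cos\phi)$, with vertex $(0,1)$ at $\theta=0$ and pole $(0,-\tfrac{1}{3})$ at $\theta=\pi$. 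What your route buys is verification of the precise normalization that the paper merely asserts --- the scale factor $\tfrac{2}{3}$, the vertex at the point labelled $0$, the cusp at $(0,-1/3)$ --- none of which can be read off the citations without redoing the coordinate conventions; what the paper's route buys is brevity and the physically illuminating caustic picture, which is also what makes property $(\bf{f})$ (chords tangent to the cardioid never cross the $0$-radius) geometrically transparent. One small point of rigor in your last step: vanishing of $x'(\pi)$ and $y'(\pi)$ shows only that the parametrization is singular there, not by itself that the point is a cusp of the curve; this is harmless, since once the curve is identified as $r=\tfrac{2}{3}(1+\cos\phi)$ the cusp at the pole is the standard feature of the cardioid, but the sentence should lean on that identification rather than on the vanishing derivatives alone.
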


Lemma \ref{caustic} is well-known \cite{b012}, \cite[p.207]{b08} from geometry. It also expresses a fact of optics: a light beam, going from 0 inside the circular mirror, reaches some point $\theta$  and then arrives to point $2\theta\pmod{2\pi}$. The envelope of these first reflection chords, that is, the chords of type $[\theta,2\theta\pmod{2\pi}]$, is the cardioid. 

\bigskip

In the sequel, we will treat  integers modulo $n$ as a ring, keeping the notation $\ZZ_n$ used previously for the corresponding additive group. The notation $\ZZ_n^*$ is saved for the set of all non-zero elements of $\ZZ_n$.

Given an odd $n\ge3$, let us label points $\theta_k=2k\pi/n, k=1,...,n-1$, by $k$, respectively. For each $k\in\ZZ_n^*$, draw the chord $[k, 2k\pmod{n}]$. (See Figure  \ref{envelope}.)
\begin{lemma}\label{family} For an odd $n\ge3$, the family of chords $\{[i,2i\pmod{n}]\}_{i=1}^{n-1}$ satisfies properties $(\bf{e})$ and $(\bf{f})$, and moreover, if $n\not\equiv 0\pmod {3}$ then properties $(\bf{b}),\ (d)$ are satisfied as well.
\end{lemma}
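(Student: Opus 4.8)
The plan is to translate each of the four geometric properties into a congruence condition on the index $i$, using the dictionary between chords and modular arithmetic already established in the proofs of Theorems \ref{Dinitz theorem} and \ref{conditions}. The single computation driving everything is that the $i$-th chord $[i,2i\bmod n]$ has endpoint-sum $i+(2i\bmod n)\equiv 3i\pmod n$; this one fact will govern both $(\bf{b})$ and $(\bf{d})$, while $(\bf{e})$ and $(\bf{f})$ are controlled by the individual endpoints.

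First I would dispose of $(\bf{e})$: the endpoints of the $i$-th chord are the labels $i$ and $2i\bmod n$, so $0$ is an endpoint only if $i\equiv 0$ or $2i\equiv 0\pmod n$. The former is excluded by $1\le i\le n-1$, and the latter is impossible since $n$ is odd, so $\gcd(2,n)=1$ forces $i\equiv 0$. (The same remark shows no chord is degenerate, as $i\equiv 2i$ would give $i\equiv 0$.) Next I would treat $(\bf{b})$ and $(\bf{d})$, where the hypothesis $n\not\equiv 0\pmod 3$ enters. By the reasoning in the proof of Theorem \ref{Dinitz theorem}, two chords are parallel exactly when their endpoint-sums agree modulo $n$, and the perpendicular bisector of a chord passes through $0$ exactly when its endpoint-sum is $\equiv 0\pmod n$. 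Since the $i$-th chord has sum $\equiv 3i$, property $(\bf{b})$ reduces to $3i\not\equiv 3i'\pmod n$ for $i\ne i'$, and $(\bf{d})$ to $3i\not\equiv 0\pmod n$. When $\gcd(3,n)=1$ the factor $3$ is invertible modulo $n$, so these reduce to $i\not\equiv i'$ and $i\not\equiv 0$, both of which hold. This is also exactly where the excluded case $n\equiv 0\pmod 3$ genuinely fails, since then $3i\equiv 0$ for $i=n/3$.

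The remaining property $(\bf{f})$ is the only one requiring a small geometric argument rather than a one-line congruence, so I expect it to be the main (though still modest) obstacle. I would invoke the characterization extracted in the proof of Theorem \ref{conditions}: a chord with endpoint-labels $s<t$ avoids the $0$-radius precisely when $t-s\le q$. It then suffices to verify this difference for every $i$ by two cases. If $i\le q$ then $2i\le n-1$, so $2i\bmod n=2i$ and the endpoints are $i<2i$ with difference $i\le q$. If $i>q$ then $2i\bmod n=2i-n<i$, so the endpoints are $2i-n<i$ with difference $n-i\le q$ (using $i\ge q+1$). In both cases $t-s\le q$, which yields $(\bf{f})$.

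Finally I would remark why $(\bf{a})$ and $(\bf{c})$ are deliberately absent from the statement. This is the full envelope family of $n-1$ chords, which necessarily shares endpoints (the chord indexed by $i$ ends where the chord indexed by $2i\bmod n$ begins) and repeats lengths, so $(\bf{a})$ and $(\bf{c})$ cannot hold for the whole family; the purpose of the subsequent sections is precisely to select from this family a sub-collection of $q$ chords that also satisfies $(\bf{a})$ and $(\bf{c})$.
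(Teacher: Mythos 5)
Your proposal is correct, and for three of the four properties it coincides with the paper's argument: the paper also disposes of $(\bf{e})$ as immediate, and proves $(\bf{b})$ and $(\bf{d})$ exactly via the endpoint-sum $i+2i\equiv 3i\pmod n$, noting that $3i\equiv 3j\pmod n$ forces $i=j$ and $3i\not\equiv 0\pmod n$ for $i\in\ZZ_n^*$ when $3\nmid n$. The genuine divergence is in $(\bf{f})$: the paper proves it in one line by appealing to Lemma \ref{caustic} --- the chords are tangent to the cardioid with vertex at the point $0$ and cusp at $(0,-1/3)$, and since the $0$-radius lies inside the region bounded by the cardioid, no tangent chord can cross it (oddness of $n$ ruling out the degenerate diameter). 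You instead give a purely arithmetic two-case verification that every chord $[i,2i\bmod n]$ has endpoint labels $s<t$ with $t-s\le q$ (difference $i$ when $i\le q$, difference $n-i$ when $i>q$), combined with the equivalence between ``label difference at most $q$'' and ``does not cross the $0$-radius.'' Your route is more elementary and self-contained, needing no envelope geometry; its only debt is that this equivalence is asserted rather than proved --- but the paper's own proof of Theorem \ref{conditions} uses the same equivalence in both directions with the same level of justification, so you are on equal footing there. What you lose is the geometric insight that names the objects (the cardioid as the common tangent curve); what you gain is independence from Lemma \ref{caustic}, which the paper cites from the literature without proof. Your closing observation that $(\bf{a})$ and $(\bf{c})$ must fail for the full family of $n-1$ chords (chord $i$ shares the endpoint $2i$ with chord $2i$, and chords $i$ and $n-i$ have equal length) is correct and accurately explains why the lemma claims only $(\bf{b})$, $(\bf{d})$, $(\bf{e})$, $(\bf{f})$.
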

\begin{proof} Property $(\bf{e})$ holds trivially. Property $(\bf{f})$ holds by Lemma \ref{caustic}, since the chords are tangent to the cardioid and $n$ is odd. Now, let $3\nmid
n$:\\
1) $\forall\  i,j\in \mathbb{Z}_n: i+2i\equiv j+2j\pmod{n}$ is equivalent to $i=j$, and hence, property $(\bf{b})$ holds;\\
2) $\forall\  i\in \mathbb{Z}^*_n: i+2i\not\equiv 0\pmod{n}$, and hence, property $(\bf{d})$ holds.
\end{proof}
\begin{figure}[htbp]
  \centering
\subfigure[]{  
\includegraphics[ width=0.47\textwidth,trim={1.8cm 5.4cm 2.5cm 5.4cm},clip]{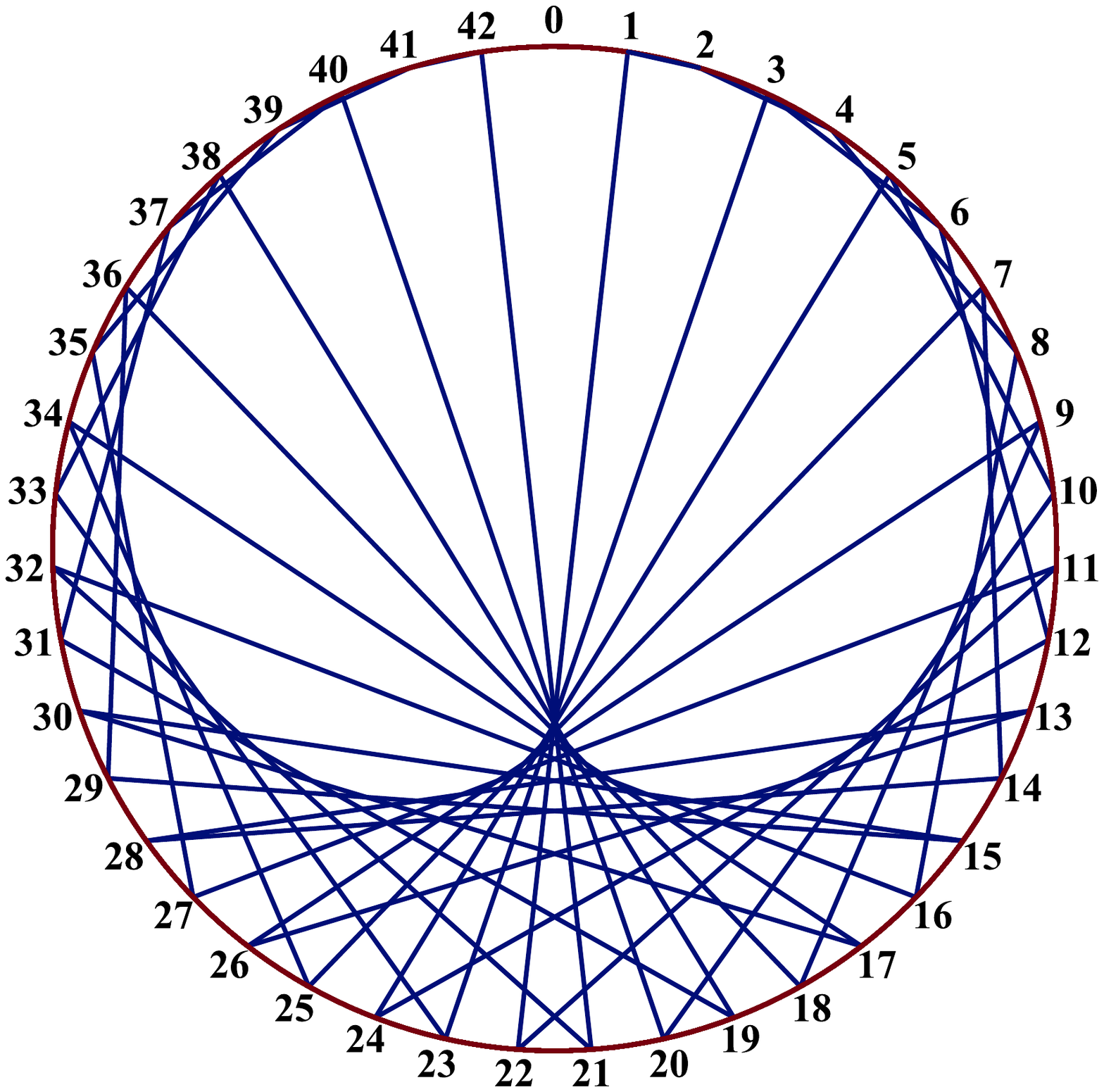}

\label{envelope}
 }
\subfigure[]{  
 \includegraphics[width=0.47\textwidth,trim={1.8cm 5.4cm 2.5cm 5.4cm},clip]{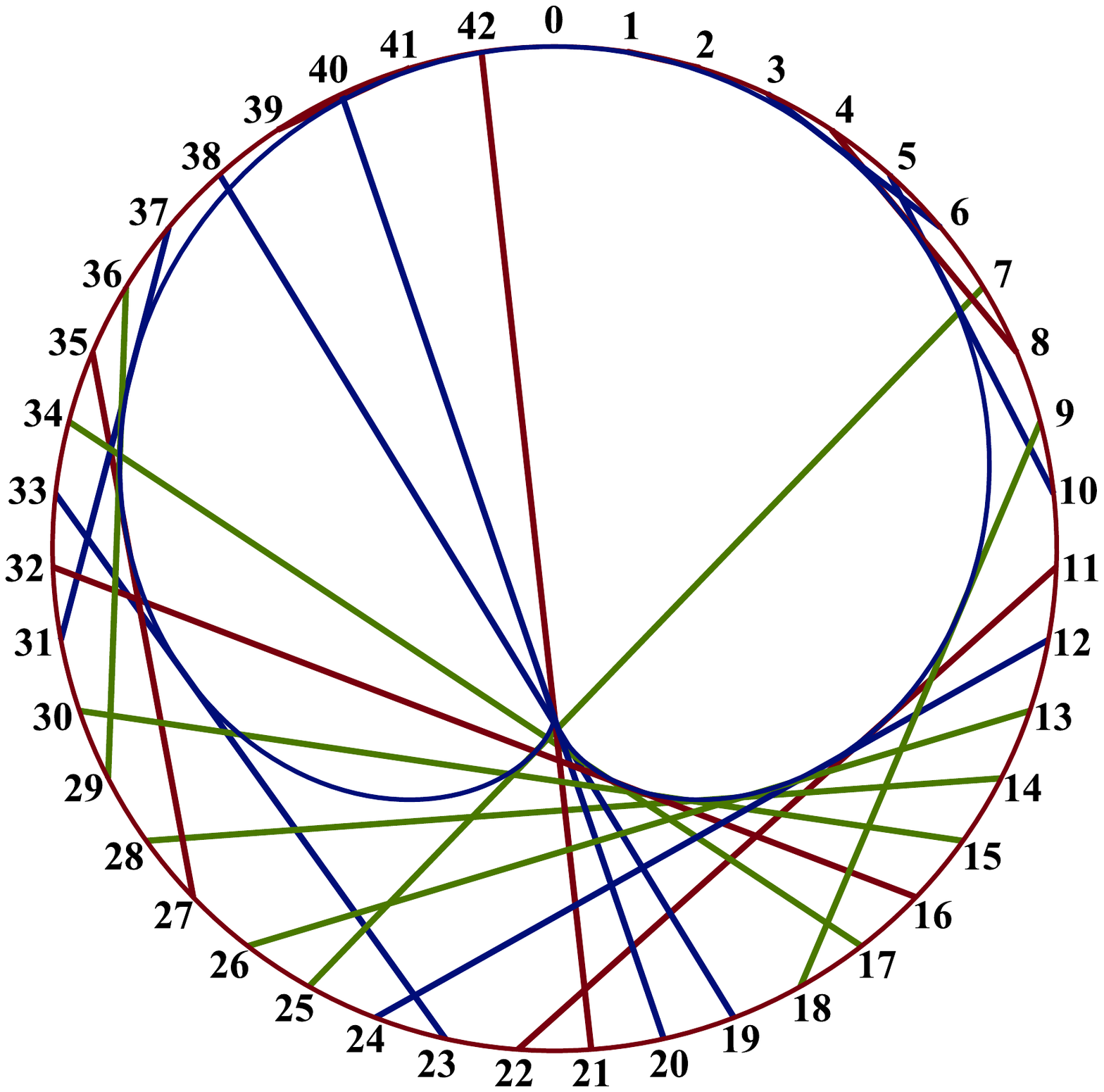}
  
\label{starter 43}
   } 
\caption{(a) The 42 circular chords of type $[i,2i\pmod{43}],\ i\in\mathbb{Z}^*_{43}$, are all tangent to the cardioid. (b) A geometrical interpretation of a strong cardioidal starter of order 43. We see one half of the family of chords shown on the left. The other half generates another strong cardioidal starter of order 43. }
\end{figure}

\begin{df}We will call a chord of type $[i,2i\pmod{n}],\ i\in\ZZ_n^*$, cardioidal$\pmod{n}$. A starter of order $n$ is cardioidal if all of its corresponding chords are cardioidal$\pmod{n}$.
\end{df}
\begin{corollary}\label{subset}Cardioidal starters comprise a nonempty proper subset of the set of Skolem starters.
\end{corollary}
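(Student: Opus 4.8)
The plan is to verify three things in turn: that every cardioidal starter is a Skolem starter (the inclusion), that at least one cardioidal starter exists (nonemptiness), and that some Skolem starter is not cardioidal (strictness of the inclusion).

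For the inclusion, I would reason as follows. A cardioidal starter is by definition a starter, so --- exactly as in the first part of the proof of Theorem \ref{Dinitz theorem} --- its chords already satisfy $(\bf{a})$, $(\bf{c})$ and $(\bf{e})$. Every one of its chords is cardioidal$\pmod n$, hence lies in the family $\{[i,2i\pmod n]\}_{i=1}^{n-1}$, and Lemma \ref{family} asserts that this whole family obeys $(\bf{f})$; since $(\bf{f})$ (``no chord crosses the $0$-radius'') is plainly inherited by every subfamily, our chords satisfy $(\bf{f})$ as well. By the Remark following Theorem \ref{conditions}, properties $(\bf{a})$, $(\bf{c})$, $(\bf{e})$, $(\bf{f})$ are precisely the geometric description of a Skolem starter, so the cardioidal starter is Skolem. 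I note that this argument requires nothing of $n$ beyond oddness, because $(\bf{f})$ holds for the entire cardioidal family regardless of whether $3\mid n$.

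For nonemptiness, I would simply exhibit one witness. The starter $S=\{\{1,2\},\{7,9\},\{3,6\},\{4,8\},\{5,10\}\}$ of Example \ref{RS+STS construction} does the job: a direct check shows that each of its pairs has the form $[i,2i\pmod{11}]$ --- for instance $\{7,9\}=[9,\,2\cdot 9\pmod{11}]$, since $2\cdot 9\equiv 7\pmod{11}$ --- so $S$ is cardioidal and the set of cardioidal starters is nonempty. (The order-$43$ configuration of Figure \ref{starter 43} would serve equally well.)

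For strictness, I would produce a Skolem starter that fails to be cardioidal. The starter $T=\{\{2,3\},\{6,8\},\{4,7\},\{1,5\}\}$ of Example \ref{skolem sequence} in $\mathbb{Z}_9$ is Skolem, yet its pair $\{2,3\}$ is not of the form $[i,2i\pmod 9]$ (neither $2\cdot 2$ nor $2\cdot 3$ is congruent to the other endpoint modulo $9$), so $T$ is not cardioidal; hence the inclusion is proper. The only place that demands care --- rather than a genuine obstacle --- is the inclusion step: one must keep the starter axioms (which supply $(\bf{a})$, $(\bf{c})$, $(\bf{e})$) separate from the cardioidal hypothesis (which supplies $(\bf{f})$ via Lemma \ref{family}) and confirm that property $(\bf{f})$ legitimately descends to subfamilies. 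Once that is in place, the explicit witnesses above settle both existence and strictness.
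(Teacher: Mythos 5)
Your proof is correct and follows essentially the same route as the paper: the same witness (Example \ref{RS+STS construction}) for nonemptiness, Lemma \ref{family} supplying property $(\bf{f})$ for the inclusion, and the starter $T$ of Example \ref{skolem sequence} for strictness. In fact your treatment of $T$ is slightly more accurate than the paper's: the paper asserts that $T$ contains \emph{no} pair of type $\{i,2i \pmod{9}\}$, which is false since $2\cdot 5\equiv 1\pmod{9}$ makes $\{1,5\}$ a cardioidal pair, whereas you correctly observe that the single non-cardioidal pair $\{2,3\}$ already suffices to show $T$ is not a cardioidal starter.
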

\begin{proof}
The starter $S$ presented in Example \ref{RS+STS construction} is a cardioidal starter of order 11. Hence the set of cardioidal starters is not empty.
By Lemma \ref{family}, the corresponding chords of a cardioidal starter satisfy property $(\bf{f})$. Hence cardioidal starters are Skolem. The converse is not true, for instance, the starter $T$  in Example \ref{skolem sequence} is a Skolem starter of order 9 which is not cardioidal as it contains no pair of type $\{i,2i\pmod{9}\}$.
\end{proof}

\begin{lemma}\label{basic statement} Let S be a cardioidal starter of order $n$. Then S is strong iff $3\nmid n$.
\end{lemma}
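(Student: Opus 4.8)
The plan is to translate strongness into a divisibility statement about the integer $3$ in $\mathbb{Z}_n$, exploiting the one feature that distinguishes cardioidal starters: every pair has the shape $\{g,2g\pmod n\}$. For such a pair the sum is $g+2g=3g$, so the set of sums is $\hat S=\{3g : g\in G\}$, where $G$ collects one doubling-generator $g$ per pair. Recalling that $S$ is strong precisely when $\hat S\subset\mathbb{Z}_n^*$ and $|\hat S|=q$, I would first reformulate the claim as: $S$ is strong iff the map $g\mapsto 3g$ sends $G$ injectively into $\mathbb{Z}_n^*$ (injectivity encoding $|\hat S|=q$, and the image avoiding $0$ encoding $\hat S\subset\mathbb{Z}_n^*$).

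The next step is to pin down the set $G$. For the cardioidal pair $\{g,2g\}$ the difference is $\pm g$, so the starter condition (that the quantities $\pm(s_i-t_i)$ exhaust $\mathbb{Z}_n^*$) says exactly that the sets $\{g,-g\}$, as $g$ ranges over $G$, partition $\mathbb{Z}_n^*$. In other words, $G$ is a complete system of representatives of $\mathbb{Z}_n^*$ modulo $\{\pm1\}$; in particular the $q$ generators are distinct and, for every nonzero $x$, exactly one of $x,-x$ lies in $G$. This is the only place where the full starter hypothesis is used, and verifying it cleanly is the part that needs the most care, since a priori a pair could be written as $\{g,2g\}$ in more than one way. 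That ambiguity occurs only for the pair $\{m,2m\}$ when $3\mid n$, and as we shall see it does not affect the conclusion; apart from it, the correspondence between pairs and generators is a bijection.

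With $G$ identified, both directions follow from the arithmetic of $3$ modulo $n$. If $3\nmid n$ then $\gcd(3,n)=1$, so multiplication by $3$ is a bijection of $\mathbb{Z}_n$: it is injective on $G$, and $3g\equiv0$ forces $g\equiv0\notin G$. Hence the sums are distinct and nonzero, and $S$ is strong. Conversely, if $3\mid n$, write $n=3m$ with $1\le m<n$, so that $m\in\mathbb{Z}_n^*$ and $3m\equiv0\pmod n$. Since exactly one of $m,\,-m=2m$ is a generator $g_0\in G$, the corresponding pair has sum $3g_0\equiv0\pmod n$, whence $\hat S\not\subset\mathbb{Z}_n^*$ and $S$ is not strong. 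Combining the two implications yields the stated equivalence. The essential content is thus entirely captured by the invertibility of $3$ in $\mathbb{Z}_n$; the one genuine obstacle is the careful bookkeeping in the second paragraph guaranteeing that a representative of $\pm m$ really appears among the generators when $3\mid n$.
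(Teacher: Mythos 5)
Your proof is correct, and it takes a genuinely more elementary route than the paper's. The paper argues geometrically: for $3\nmid n$ it invokes Lemma \ref{family} (cardioidal chords automatically satisfy properties (b) and (d) when $3\nmid n$) together with the converse direction of Theorem \ref{Dinitz theorem} to conclude strongness, and for $3\mid n$ it counts chord lengths --- the family of all cardioidal chords modulo $n=3l$ has only $(3l-1)/2$ distinct lengths, so property (a) forces the unique horizontal chord $[l,2l]$ into any cardioidal starter, violating property (d). You bypass this machinery entirely: you encode strongness as the statement that $g\mapsto 3g$ is injective and nonvanishing on a set $G$ of generators, observe that the starter condition makes $G$ a transversal of $\mathbb{Z}_n^*$ modulo $\{\pm1\}$, and reduce both directions to whether $3$ is a unit in $\mathbb{Z}_n$. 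Your handling of the case $3\mid n$ --- exactly one of $m$ and $2m=-m$ lies in $G$, and either choice yields the pair $\{m,2m\}$ with sum $0$ --- is precisely the algebraic counterpart of the paper's unique-horizontal-chord argument. What your version buys is self-containedness (no appeal to Theorem \ref{Dinitz theorem} or Lemma \ref{family}) and a clean identification of the mechanism, namely the invertibility of $3$ modulo $n$; what the paper's version buys is coherence with its geometric narrative and reuse of lemmas it had already established. Your explicit caution about the ambiguous pair $\{m,2m\}$, the only cardioidal pair admitting two generator representations, is warranted and is resolved correctly, since both representations give the same pair and the same vanishing sum.
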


\noindent\textit{Proof.} Let $C$ be the set of chords corresponding to $S$. Let also $C'$ be the family of all cardioidal chords (mod $n$).
\begin{enumerate}
\item Case $3\nmid n$. Since $S$ is a starter, the set $C$ satisfies properties $(\bf{a}),(\bf{c}),(\bf{e})$. Since $C\subset C'$ and $3\nmid n$, then, by  Lemma \ref{family}, $C$ satisfies properties $(\bf{b}),(\bf{d})$. Hence, by Theorem \ref{Dinitz theorem}, the starter $S$ generated by $C$ is strong.

\item Case $3\mid n$. Let $n=3l,\ l\in\NN$. The family $C'$ contains the chords of $(3l-1)/2$ different sizes. Let $Q\in C'$ be the chord $[l,\ 2l]=[-l,\ -2l\pmod{n}]$. Hence, $Q$ is the only chord of its size in $C'$. Further, since $C\subset C'$ generates a starter, $C$ satisfies property $(\bf{a})$ and consists of $(3l-1)/2$ chords. Hence, $Q\in C$. But then $C$ violates property $(\bf{d})$, since $Q$ is horizontal. This implies that the starter $S$ is not strong. \hfill $\Box$
\end{enumerate}

A strong starter is called \textit{skew} if the sums of its pairs $s_i+t_i$, and their negatives $-s_i-t_i$ are all distinct \cite{b014}. In other words, a starter of order $n,\ \{\{s_i,t_i\}\}_{i=1}^{(n-1)/2}$, is skew if $\{\pm(s_i+t_i)\}=\mathbb{Z}_n^*$. Skew starters are important for constructing \textit{skew Room squares}\footnote{A Room square is called skew if, whenever $i\ne j$, precisely one of the $(i,j)$th and $(j,i)$th cells is occupied \cite{b014}. An example of a skew Room square constructed by use of a cardioidal starter is presented in Section \ref{introduction}. }.

\begin{theorem}\label{skew} A cardioidal starter of order $n,\ 3\nmid n$, is skew.
\end{theorem}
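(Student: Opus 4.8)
The plan is to exploit the special algebraic form of cardioidal pairs: for a cardioidal chord both the difference and the sum of its two endpoints are scalar multiples of a single ``generator,'' so that the \emph{starter} condition (a statement about differences) can be transported directly into the \emph{skew} condition (a statement about sums). Concretely, I would write $S$ in generator form. By definition every pair of a cardioidal starter is a cardioidal chord, so $S=\{\{k_i,2k_i\}\}_{i=1}^{q}$ with $q=(n-1)/2$ and each $k_i\in\mathbb{Z}_n^*$. Before invoking skewness I would note that, since $3\nmid n$, Lemma \ref{basic statement} already guarantees $S$ is strong, so the notion applies. For a pair $\{k_i,2k_i\}$ the difference is $\pm(k_i-2k_i)=\mp k_i$ and the sum is $k_i+2k_i=3k_i$, independently of which endpoint is labelled $s_i$ and which $t_i$.

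Next I would read off what the two conditions say in this notation. Because $S$ is a starter, the quantities $\pm(s_i-t_i)$ run over $\mathbb{Z}_n^*$ exactly once; in generator form this is precisely $\{\pm k_i\}_{i=1}^{q}=\mathbb{Z}_n^*$. Skewness, on the other hand, demands $\{\pm(s_i+t_i)\}_{i=1}^{q}=\{\pm 3k_i\}_{i=1}^{q}=\mathbb{Z}_n^*$. To close the gap I would use that $n$ is odd and $3\nmid n$, whence $\gcd(3,n)=1$ and multiplication by $3$ is a bijection of $\mathbb{Z}_n$ fixing $\mathbb{Z}_n^*$ setwise and commuting with negation; therefore $\{\pm 3k_i\}=3\cdot\{\pm k_i\}=3\cdot\mathbb{Z}_n^*=\mathbb{Z}_n^*$, which is exactly skewness.

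There is no serious obstacle beyond bookkeeping: the hypothesis $3\nmid n$ enters only to make $3$ a unit and (through Lemma \ref{basic statement}) to ensure strongness. The single point that warrants a line of care is that the generator representation $\{k_i,2k_i\}$ is unambiguous---a pair cannot simultaneously equal $\{k,2k\}$ and $\{k',2k'\}$ with $k\ne k'$---since $2k=k'$ and $2k'=k$ would force $3k\equiv 0\pmod n$, impossible when $3\nmid n$ and $k\ne 0$. With this remark in place, the whole statement reduces to the observation that, for cardioidal pairs, the sum is obtained from (the negative of) the difference by multiplying by the fixed unit $-3$, so the difference coverage of $\mathbb{Z}_n^*$ forces the sum coverage of $\mathbb{Z}_n^*$.
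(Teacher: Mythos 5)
Your proof is correct and takes essentially the same approach as the paper: both reduce skewness to the observation that a cardioidal pair $\{k,2k\}$ has difference $\pm k$ and sum $3k$, so that multiplication by $3$ --- a unit of $\mathbb{Z}_n$ since $3\nmid n$ --- carries the starter condition $\{\pm k_i\}_{i=1}^{q}=\mathbb{Z}_n^*$ into the skew condition $\{\pm 3k_i\}_{i=1}^{q}=\mathbb{Z}_n^*$. If anything, your bookkeeping is tighter: the paper attributes the pairwise inequalities $3i\not\equiv\pm 3j\pmod{n}$ to Lemma~\ref{family}, which strictly supplies only $3i\not\equiv 3j$ and $3i\not\equiv 0$, whereas you derive the full set equality directly from the starter's difference property.
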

\begin{proof}
If a starter of order $n$ is cardioidal and $3\nmid n$, then, by Lemma \ref{family}, for any pair of its corresponding chords $[i,2i\pmod{n}]$ and $[j,2j\pmod{n}]$, there holds $3i\not\equiv\pm 3j$ (mod $n$). Therefore, $\{\{\pm(i+2i)\pmod{n}\}\}_{i=1}^{(n-1)/2}=\mathbb{Z}^*_n$. Hence, the starter is skew.
\end{proof}
\section{Construction of cardioidal starters}\label{constructing}

By $G_n$, we denote the group of  units of the ring $\ZZ_n$ (elements invertible with respect to multiplication).
We denote by $\abr{x}_n$ the cyclic subgroup of $G_n$ generated by $x\in G_n$.

Whenever the group operation is irrelevant, we will consider $G_n$ and its cyclic multiplicative subgroups $\abr{x}_n$ in the set-theoretical sense and denote them by $\underline{G}_n$ and $\abr{\underline{x}}_n$ respectively. Also, we will use notation $aB=\{ab|\ b\in B\}$, where $a\in\ZZ$ and $B\subset\ZZ$.

Let us denote by $\ord_n (x)$ the order of an element $x$ in the group $G_n\ (\ord_n (x)=|\abr{x}_n|$), and introduce two classes of numbers 
\begin{equation}\label{C2 class}
C_2=\{p\ {\rm{prime}}\mid\ \ord_p (2)\equiv 2\pmod{4}\},
\end{equation}
and a subset of $C_2$,
\begin{equation}\label{P2 class}
P_2=\{ p\in C_2\mid\ p\equiv1\pmod{8}\}.
\end{equation}

Before we proceed to the statement about the existence of cardioidal starters of certain orders, we prove three auxilliary lemmas.

\begin{lemma} A prime $p\in C_2$ iff either $p\in P_2$ or $p\equiv 3\pmod{8}$.
\end{lemma}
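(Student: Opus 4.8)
The plan is to reduce the whole statement to a single computation of the $2$-adic valuation of $\ord_p(2)$. First I would record the elementary reformulation that the defining condition of $C_2$, namely $\ord_p(2)\equiv2\pmod4$, is exactly the assertion that $v_2(\ord_p(2))=1$, where $v_2$ denotes the exponent of $2$ in a positive integer. (Note that $\ord_p(2)$ only makes sense for odd $p$, so $p$ is automatically odd throughout.) Since $P_2\subseteq C_2$ by definition, the backward implication ``$p\in P_2\Rightarrow p\in C_2$'' is immediate; the real content is the forward implication ``$p\in C_2\Rightarrow(p\equiv1$ or $p\equiv3\pmod8)$'' together with ``$p\equiv3\pmod8\Rightarrow p\in C_2$.'' Both of these I would obtain from the case analysis described below.

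The key tool I would establish is a short lemma about the cyclic group $G_p$ of order $p-1$. Writing $a=v_2(p-1)$ and fixing a primitive root $g$, every $x\in G_p$ is $g^j$ for some $j$, and $\ord_p(x)=(p-1)/\gcd(j,p-1)$, whence $v_2(\ord_p(x))=a-\min(v_2(j),a)$. Because $a\ge1$, this valuation equals $a$ exactly when $j$ is odd, and is strictly less than $a$ exactly when $j$ is even. Since $x=g^j$ is a quadratic residue iff $j$ is even, we conclude that $x$ is a non-residue iff $v_2(\ord_p(x))=v_2(p-1)$, and a residue iff $v_2(\ord_p(x))<v_2(p-1)$. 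Applying this to $x=2$ and invoking the supplementary law of quadratic reciprocity (which says $2$ is a quadratic residue mod $p$ iff $p\equiv\pm1\pmod8$) converts the residuacity of $2$ directly into information about $v_2(\ord_p(2))$.

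With these in hand the result falls out of four cases according to $p\bmod8$. For $p\equiv3\pmod8$ we have $v_2(p-1)=1$ and $2$ a non-residue, forcing $v_2(\ord_p(2))=1$, so $p\in C_2$. For $p\equiv5\pmod8$ we have $v_2(p-1)=2$ and $2$ a non-residue, giving $v_2(\ord_p(2))=2\ne1$, so $p\notin C_2$; for $p\equiv7\pmod8$ we have $v_2(p-1)=1$ and $2$ a residue, forcing $v_2(\ord_p(2))=0$, so again $p\notin C_2$. Finally for $p\equiv1\pmod8$ we have $v_2(p-1)\ge3$ and $2$ a residue, which only bounds $v_2(\ord_p(2))\le v_2(p-1)-1$ and so leaves both possibilities open; those $p\equiv1\pmod8$ that do lie in $C_2$ are precisely the elements of $P_2$. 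Combining the four cases yields the claimed equivalence. The step I expect to require the most care is the valuation lemma of the second paragraph, since the entire argument hinges on the clean dichotomy between residues and non-residues in terms of $v_2(\ord_p(\cdot))$; once that is in place, the case analysis is routine, with $p\equiv3\pmod8$ being the one genuinely substantive computation and $p\equiv1\pmod8$ being essentially definitional.
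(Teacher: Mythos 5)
Your proof is correct, and it follows the same overall skeleton as the paper's: a four-way case analysis on $p \bmod 8$ driven by the supplementary law of quadratic reciprocity for $2$. The difference is in how the order computation is organized. The paper argues each case separately and somewhat ad hoc: for $p\equiv3\pmod 8$ it notes that $\ord_p(2)$ is even and divides $p-1\equiv2\pmod 8$; for $p\equiv7\pmod 8$ that $\ord_p(2)$ divides the odd number $(p-1)/2$; and for $p\equiv5\pmod 8$ it needs the more delicate argument that $2^{\mu/2}\equiv-1\pmod p$ together with $-1$ being a quadratic residue forces $\mu/2$ to be even. You instead factor all three cases through a single valuation lemma: writing $x=g^j$ in the cyclic group $G_p$, one has $v_2(\ord_p(x))=v_2(p-1)$ exactly when $x$ is a non-residue and $v_2(\ord_p(x))<v_2(p-1)$ exactly when $x$ is a residue. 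This lemma is correct (the computation $v_2(\ord_p(g^j))=v_2(p-1)-\min(v_2(j),v_2(p-1))$ is right, and the residue/parity correspondence is standard), and it subsumes the paper's three case arguments at once: it pins down $v_2(\ord_p(2))$ exactly, so $p\equiv5\pmod8$ requires no special square-root argument, and $p\equiv1\pmod 8$ is correctly left indeterminate, which is precisely why $P_2$ must be singled out by definition. What the paper's version buys is that each case rests only on first-principles divisibility facts; what yours buys is uniformity and a clean, reusable dichotomy showing that the only inputs are the quadratic character of $2$ and $v_2(p-1)$.
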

\noindent\textit{Proof.} Let $p$ be a prime number. Then $G_p$ is cyclic.  Consider four possible cases:
\begin{enumerate}
\item $p\equiv1\pmod{8}$. If $p\in P_2$ then $p\in C_2$ by definition of $P_2$. Otherwise, $p\notin C_2$.
\item $p\equiv3\pmod{8}$. By a special case of the Quadratic Resiprocity Law \cite{b015}, 2 is a quadratic non-residue modulo $p$. Hence, $\ord_p(2)$ is even. Also, $\ord_p (2)$ divides $|G_p|=p-1\equiv2\pmod{8}$, hence, $\ord_p(2 )\not\equiv0\pmod{4}$. Therefore, $p\in C_2$. 
\item $p\equiv5\pmod{8}$. In that case, 2 is a quadratic non-residue modulo $p$ \cite{b015}. Hence, $\ord_p(2)=\mu$ is even. Then $2^{\mu/2}$ is a square root of one, which is not 1 (otherwise, $\ord_p(2)=\mu/2<\mu$). Since $p$ is prime, the only square roots of 1 modulo $p$ are $\pm1$, hence, $2^{\mu/2}\equiv -1\pmod{p}$. Since $-1$ is a quadratic residue modulo $p$ \cite{b015}, $\mu/2$ is even. Therefore, $\mu\equiv0\pmod{4}$. This implies $p\notin C_2$.
\item $p\equiv7\pmod{8}$. In that case, 2 is a quadratic residue modulo $p$ \cite{b015}. Hence, $\ord_p(2)$ must divide $|G_p|/2=(p-1)/2$. Since the latter is odd, so is $\ord_p(2)$. Hence, $p\notin C_2$.\hfill $\Box$
\end{enumerate}
\begin{remark}
The class $C_2\setminus P_2$ is infinite by Dirichlet's theorem on primes in arithmetic progressions. The class $P_2=\{281,617, 1033,1049,1097,1193,1481,1553,1753,1777,...\}$ is also infinite. It follows, for example, from the infinitude of primes congruent to $9\pmod{16}$ such that $\ord_p(2)\equiv2\pmod{4}$ \cite{b019}. 
\end{remark}

Denote by $\overline{C_2}$ the \textit{multiplicative closure}\footnote{Recall that a non-empty subset $X$ of a ring $R$ is multiplicatively closed if whenever $u,v\in X$, then $uv\in X$. The multiplicative closure of a set $Y\subset R$, denoted by $\overline{Y}$, is the smallest multiplicatively closed set containing $Y$.} of $C_2$.  So if $n\in\overline{C_2}$, then $n$ is a product of $($not necessarily distinct$)$ primes from $C_2$.

\begin{lemma}\label{subgroup of 2} Let $n\in\overline{C_2}$. Then in $G_n,$ the element $-1$ is an odd power of $2$ and $\ord_n (2)\equiv2\pmod{4}$. 
\end{lemma}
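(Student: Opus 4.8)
The plan is to reduce to the prime-power case by the Chinese Remainder Theorem and then synchronize the exponents across the factors. Write $n=p_1^{k_1}\cdots p_r^{k_r}$ with the $p_i\in C_2$ distinct (necessarily odd, since $2\notin C_2$). CRT gives a ring isomorphism $\ZZ_n\cong\prod_i\ZZ_{p_i^{k_i}}$ under which $G_n\cong\prod_i G_{p_i^{k_i}}$, the element $-1$ corresponds to the tuple $(-1,\dots,-1)$, and $\ord_n(2)=\mathrm{lcm}_i\,\ord_{p_i^{k_i}}(2)$. So it suffices to understand each local order $\ord_{p_i^{k_i}}(2)$, to represent $-1$ as a power of $2$ in each factor, and finally to find one common odd exponent.

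First I would handle a single prime power $p^k$ with $p\in C_2$, so that $\ord_p(2)=2m$ with $m$ odd. The reduction homomorphism $G_{p^k}\to G_p$ is surjective with kernel of order $p^{k-1}$, whence $\ord_{p^k}(2)/\ord_p(2)$ divides $p^{k-1}$ and is therefore a power of the odd prime $p$. Thus $\ord_{p^k}(2)=2m\,p^{s}$ for some $s\ge 0$, and its $2$-adic valuation is exactly $1$, i.e. $\ord_{p^k}(2)\equiv 2\pmod 4$. Setting $a:=m\,p^{s}=\ord_{p^k}(2)/2$ (odd), the element $2^{a}$ is a square root of unity in $G_{p^k}$ distinct from $1$ (its exponent is half the order); since the only square roots of unity modulo an odd prime power are $\pm1$, we obtain $2^{a}\equiv-1\pmod{p^k}$ with $a$ odd.

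Returning to general $n$, each factor now contributes $\ord_{p_i^{k_i}}(2)=2a_i$ with $a_i$ odd; since every term of the least common multiple has $2$-adic valuation exactly $1$, so does the lcm, giving $\ord_n(2)=2a$ with $a:=\mathrm{lcm}_i\,a_i$ odd, and hence $\ord_n(2)\equiv 2\pmod 4$. For the first assertion, set $b_i:=a/a_i$; because $a$ and $a_i$ are both odd, each $b_i$ is odd, so $2^{a}=(2^{a_i})^{b_i}\equiv(-1)^{b_i}=-1\pmod{p_i^{k_i}}$ for every $i$. By CRT, $2^{a}\equiv-1\pmod n$ with $a=\ord_n(2)/2$ odd, so $-1$ is an odd power of $2$ in $G_n$.

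I expect the main obstacle to be this final synchronization: producing a single odd exponent that sends $2$ to $-1$ in every prime-power coordinate simultaneously. It works precisely because each cofactor $a/a_i$ is odd, which in turn rests on each local order having $2$-adic valuation exactly $1$ — the property inherited from the definition of $C_2$ and preserved under passage to prime powers thanks to $p$ being odd. The individual facts about orders modulo prime powers and square roots of unity are standard; the content of the lemma is their coherent assembly across the factorization of $n$.
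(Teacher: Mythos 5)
Your proof is correct and follows essentially the same route as the paper's: settle the prime-power case first (using that the only square roots of unity modulo an odd prime power are $\pm1$), then synchronize across the factors of $n$ via the Chinese Remainder Theorem with an odd exponent. The only differences are minor refinements: you pin down the 2-adic valuation of $\ord_{p^k}(2)$ in one step via the kernel of the reduction $G_{p^k}\to G_p$, where the paper first proves evenness by contradiction and then separately shows the half-order is odd, and you use the least common multiple of the local half-orders where the paper uses their product --- both choices produce an odd exponent sending $2$ to $-1$ modulo $n$.
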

\noindent\textit{Proof.} Consider two cases:
\begin{enumerate}
\item $n=p^k,\ p\in C_2, k\in\mathbb{N}$. 

Suppose, by way of contradiction, that $\mu=\ord_n 2$ is odd. But $2^{\mu}\equiv1\pmod{n}$ implies that $2^{\mu}\equiv1\pmod{p}$, which contradicts the fact that $\ord_p (2)$ is even, since $p\in C_2$. Hence, $\mu$ is even. 

Since $\mu$ is even, we can write $(2^{\mu/2})^2\equiv1 \pmod{n}$. Since $G_n$ is cyclic \cite{b015} and $|G_n|=p^{k-1}(p-1)$ is even, the congruence $x^2\equiv 1$ has only two solutions in $G_n$. These are $\pm 1$. Therefore, $2^{\mu/2}\equiv1$ or $2^{\mu/2}\equiv-1\pmod{n}$. But $2^{\mu/2}\equiv 1\pmod{n}$ is impossible since $\mu=\ord_n(2)>\mu/2.$ We conclude that $2^{\mu/2}\equiv-1\pmod{n}$. 

Let us show $\lambda=\mu/2$ is odd.  Clearly, $2^{\lambda}\equiv-1\pmod{n}$ implies $2^{\lambda}\equiv-1\pmod{p}$. Then consider $\abr{2}_p$. Since $p\in C_2,\ \nu=\ord_p(2) /2$ is odd. In $G_p,\ 2^{\nu}$ is a square root of 1, which is distinct from 1, that is, $2^{\nu}\equiv -1\pmod{p}$. Therefore, $\lambda$ is odd, that is, $-1$ is an odd power of 2 modulo $n$, and hence, $\ord_n(2)\equiv2\pmod{4}$.

\item $n=\Pi_{i=1}^m n_i$, $m>1$, where $n_i=p_i^{k_i},\ p_i\in C_2,\ i=1,..., m$, are pairwise distinct, $k_i\in\NN,\ i=1,...,m$. 

Denote $t_i=\ord_{n_i}(2)/2$ and $P=\Pi_{i=1}^m t_i$. By Case 1, we have $2^{t_i}\equiv-1 \pmod{ n_i},\ i=1,...,m$. 
Raising these congruences to the power $P/t_i$, respectively, we obtain $m$ congruences: $2^P\equiv-1 \pmod{ n_i}$,  where the numbers $n_i$ are pairwise coprime, $i=1,...,m$.  The system of congruences $x\equiv-1 \pmod{ n_i},\ i=1,...,m$, for an unknown element $x$ of the ring $\ZZ_n$, has an obvious solution $x\equiv-1\pmod{n}$. 
By the Chinese Reminder Theorem, this solution is unique. We conclude that $2^P\equiv-1 \pmod{ n}$. But $P$ is odd, since, by Case 1, each $t_i,\ i=1,...,m$, is odd. Hence, $-1$ is an odd power of 2. It implies that $\ord_n (2)\equiv2\pmod{4}$.\hfill $\Box$
\end{enumerate}
Let $A$ be a set of $k$ pairs of integers modulo $n:\ A=\{\{x_i,y_i\}\pmod{n}\}_{i=1}^k,\ k\in\mathbb{N}$. We define
\begin{equation}
\Delta A=\{\pm(x_i-y_i)\pmod{n}\}_{i=1}^k
\end{equation}
to be the collection of the differences between the two elements within each pair of $A$. We will say that $\Delta A$ is generated by $A$.
\begin{corollary}\label{the differences}Let $\mu=\ord_n(2)$, where $n\in\overline{C_2}$. Then the following is a partition of the set $\langle\underline{2}\rangle_n:\ A=\{\{2^{2i},2^{2i+1}\}\}_{i=0}^{(\mu-2)/2}$. Consequently, $\Delta A=\langle \underline{2}\rangle_n$.
\end{corollary}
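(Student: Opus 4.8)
The plan is to verify the two assertions in turn, with the second one — that $\Delta A = \langle\underline{2}\rangle_n$ — carrying the real content. First I would record the structural facts I will lean on. Since $n\in\overline{C_2}$, Lemma \ref{subgroup of 2} gives $\mu=\ord_n(2)\equiv 2\pmod 4$; in particular $\mu$ is even, so $(\mu-2)/2=\mu/2-1$ is a nonnegative integer, and $\mu/2$ is \emph{odd}. By the definition of the order, the map $k\mapsto 2^k\pmod n$ is a bijection from $\{0,1,\dots,\mu-1\}$ onto $\langle\underline{2}\rangle_n$, so these $\mu$ powers are pairwise distinct.

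For the partition claim I would observe that, as $i$ runs from $0$ to $\mu/2-1$, the exponents $2i$ and $2i+1$ run without repetition through all of $0,1,\dots,\mu-1$. Via the bijection above, the $\mu/2$ pairs $\{2^{2i},2^{2i+1}\}$ therefore have union $\langle\underline{2}\rangle_n$, are pairwise disjoint, and have distinct entries (since $2i\ne 2i+1$ and the powers are pairwise distinct). Hence $A$ is a partition of $\langle\underline{2}\rangle_n$.

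The heart of the argument is computing $\Delta A$. In each pair the difference is $2^{2i+1}-2^{2i}=2^{2i}$, so $\Delta A=\{\pm 2^{2i}\pmod n\}_{i=0}^{\mu/2-1}$, a set of at most $\mu$ elements, all lying in $\langle\underline{2}\rangle_n$ (which has exactly $\mu$ elements). It thus suffices to show these $\mu$ elements are distinct, i.e. that they exhaust $\langle\underline{2}\rangle_n$. Here I would invoke Lemma \ref{subgroup of 2} in its sharper form: $2^{\mu/2}\equiv-1\pmod n$ with $\mu/2$ odd. This lets me rewrite $-2^{2i}\equiv 2^{2i+\mu/2}\pmod n$, an odd exponent. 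Reducing exponents modulo $\mu$, the powers $2^{2i}$ realize exactly the even residues $0,2,\dots,\mu-2$, while the powers $2^{2i+\mu/2}$ realize exactly the odd residues $1,3,\dots,\mu-1$ (adding the fixed odd number $\mu/2$ is a bijection of $\ZZ_\mu$ carrying even residues to odd ones). Together they give every residue mod $\mu$ exactly once, so $\Delta A=\{2^0,2^1,\dots,2^{\mu-1}\}=\langle\underline{2}\rangle_n$.

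The step I expect to be delicate is this parity bookkeeping, and it is exactly where the hypothesis $n\in\overline{C_2}$ is indispensable. If instead $\mu\equiv 0\pmod 4$ — that is, if $-1$ were an \emph{even} power of $2$ — then $-2^{2i}$ would again be an even power, $\Delta A$ would consist only of even powers of $2$, and it would be the proper subgroup of squares rather than all of $\langle\underline{2}\rangle_n$. So the whole corollary hinges on the oddness of $\mu/2$ supplied by Lemma \ref{subgroup of 2}.
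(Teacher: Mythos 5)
Your proof is correct and follows essentially the same route as the paper's: both reduce $\Delta A$ to $\{\pm 2^{2i}\}_{i=0}^{(\mu-2)/2}$ and then use Lemma \ref{subgroup of 2} (that $\mu\equiv2\pmod 4$ and $-1$ is an odd power of $2$) to see that the even powers of $2$ together with their negatives exhaust $\langle\underline{2}\rangle_n$. Your exponent-parity bookkeeping via $-2^{2i}\equiv 2^{2i+\mu/2}$ is just an unpacked version of the paper's identity $\langle\underline{4}\rangle_n\cup(-1)\cdot\langle\underline{4}\rangle_n=\langle\underline{2}\rangle_n$.
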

\begin{proof} By Lemma \ref{subgroup of 2}, the number $\mu=\ord_n (2)$ is even, hence, the partition $A$, as indicated, is possible. Also by Lemma \ref{subgroup of 2}, $-1$ is an odd power of $2$, that is, $-1\in\langle \underline{2}\rangle_n\setminus\langle \underline{4}\rangle_n$. Then we have $\Delta A=\{\pm(2^1-2^0),...,\pm(2^{\mu-1}-2^{\mu-2})\}=\{\pm 2^{2i}\}_{i=0}^{(\mu-2)/2}=\langle \underline{4}\rangle_n\cup -1\cdot\langle \underline{4}\rangle_n=\langle \underline{2}\rangle_n$.
\end{proof}

\begin{df}For an odd $n\in\mathbb{N}\setminus\{1\}$ and $i\in\ZZ^*_n$, let us call the pair $\{i,2i\pmod{n}\}$ \textit{a cardioidal pair} in $\ZZ_n^*$. A partition $A$ of a set $X\subset\ZZ^*_n$ into cardioidal pairs is called a \textit{cardioidal partition} of $X$ in $\ZZ_n^*$.
\end{df}
\begin{lemma}\label{group of units}The set $\underline{G}_n,\ n\in\overline{C_2}$,  admits a cardioidal partition $A$ in $\ZZ_n^*$ such that $\Delta A=\underline{G}_n$.
\end{lemma}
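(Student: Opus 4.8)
The goal is to show that the full group of units $\underline{G}_n$ (for $n \in \overline{C_2}$) can be partitioned into cardioidal pairs $\{i, 2i\}$ whose differences $\Delta A$ recover all of $\underline{G}_n$. Corollary \ref{the differences} already delivers exactly this for the cyclic subgroup $\abr{\underline{2}}_n$: it partitions $\abr{\underline{2}}_n$ into cardioidal pairs with $\Delta A = \abr{\underline{2}}_n$. So the natural plan is to decompose $\underline{G}_n$ into cosets of $\abr{\underline{2}}_n$ and transport the Corollary to each coset by multiplication.

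\medskip

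\noindent\textbf{Plan of proof.} First I would observe that $\abr{2}_n$ is a subgroup of the abelian group $G_n$, so $G_n$ decomposes as a disjoint union of cosets $g_1\abr{2}_n, \dots, g_r\abr{2}_n$, where $r = |G_n|/\mu$ and $\mu = \ord_n(2)$. Fix the cardioidal partition $A_0 = \{\{2^{2i}, 2^{2i+1}\}\}_{i=0}^{(\mu-2)/2}$ of $\abr{\underline{2}}_n$ from Corollary \ref{the differences}. The key structural point is that cardioidal pairs are preserved under multiplication by any unit: if $\{x, 2x\}$ is a cardioidal pair, then so is $g\{x,2x\} = \{gx, 2gx\}$ for $g \in \underline{G}_n$, since the second coordinate is still twice the first. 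Hence for each coset representative $g_j$ I would set $A_j = g_j A_0 = \{\{g_j 2^{2i}, g_j 2^{2i+1}\}\}_i$, a cardioidal partition of the coset $g_j \abr{\underline{2}}_n$. Taking $A = \bigcup_{j=1}^{r} A_j$ gives a cardioidal partition of all of $\underline{G}_n$, because the cosets partition $\underline{G}_n$ and each $A_j$ partitions its coset.

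\medskip

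\noindent It remains to verify the difference condition $\Delta A = \underline{G}_n$. Since $\Delta$ is compatible with scaling --- multiplying every entry of a pair set by a fixed unit $g_j$ multiplies the generated difference set by $g_j$, i.e. $\Delta(g_j A_0) = g_j\,\Delta A_0$ --- Corollary \ref{the differences} gives $\Delta A_j = g_j \abr{\underline{2}}_n$, namely the $j$-th coset itself. Therefore
\begin{equation}
\Delta A = \bigcup_{j=1}^{r} \Delta A_j = \bigcup_{j=1}^{r} g_j\abr{\underline{2}}_n = \underline{G}_n,
\end{equation}
which is the required conclusion. Lemma \ref{subgroup of 2} is what makes the scaling argument legitimate at the level of unsigned differences: because $-1 \in \abr{\underline{2}}_n$, the set $\abr{\underline{2}}_n$ is closed under negation, so $\Delta A_0 = \abr{\underline{2}}_n$ (not merely $\abr{\underline{4}}_n \cup -\abr{\underline{4}}_n$ as an abstract object) and the cosets $g_j\abr{\underline{2}}_n$ are genuinely symmetric under $\pm$, so that the $\pm$ built into $\Delta$ does not force any entry outside its intended coset.

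\medskip

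\noindent\textbf{Main obstacle.} The delicate point is not the coset bookkeeping but ensuring the difference sets $\Delta A_j$ tile $\underline{G}_n$ \emph{without overlap and without spillover across cosets}. The definition of $\Delta$ inserts a $\pm$ sign, so a priori $\Delta A_j$ could contain both $g_j\abr{\underline{2}}_n$ and $-g_j\abr{\underline{2}}_n$. The reason this collapses to a single coset is precisely $-1 \in \abr{\underline{2}}_n$ (Lemma \ref{subgroup of 2}), which forces $-g_j\abr{\underline{2}}_n = g_j\abr{\underline{2}}_n$. I would therefore state this closure-under-negation fact explicitly and lean on it, since it is what guarantees both that each coset's contribution lands inside that same coset and that the union over distinct cosets is disjoint, yielding exactly $\underline{G}_n$ with the correct cardinality.
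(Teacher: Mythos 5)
Your proposal is correct and follows essentially the same route as the paper: the paper likewise decomposes $G_n$ into cosets of $\abr{2}_n$, forms the identical partition $A=\{\{\beta_i 2^{2j},\beta_i 2^{2j+1}\}\}$ from coset representatives, and relies on $-1$ being an odd power of $2$ (Lemma \ref{subgroup of 2}). The only difference is bookkeeping in the last step --- the paper verifies $G_n\subset\Delta A$ element-by-element via the parity argument and then concludes by cardinality, whereas you transport Corollary \ref{the differences} coset-by-coset using the scaling identity $\Delta(g_jA_0)=g_j\,\Delta A_0$, which is a slightly cleaner packaging of the same idea.
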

\begin{proof}

Since $\langle 2\rangle_n$ is a subgroup of $G_n,\ \ord_n(2)$ divides $|G_n|$. Let $\mu=\ord_n(2)$ and $ b=|G_n|/\mu$. Denote the $b-1$ cosets of $\langle 2\rangle_n$ by $B_1,B_2,...,B_{b-1}$.

Let  $\beta_1\in B_1,...,\beta_{b-1}\in B_{b-1}$ and $\beta_0=1$. Then $A$ is a cardioidal partition of $\underline{G}_n$ in $\ZZ_n^*$: 
\begin{equation}\begin{split}
A&=\{\{2^0,2^1\},...,\{2^{\mu-2},2^{\mu-1}\},\{\beta_1 2^0,\beta_12^1\},...,
 \{\beta_{b-1}2^{\mu-2},\beta_{b-1}2^{\mu-1}\}\}\\
&=\{\{\beta_i 2^{2j},\beta_i2^{2j+1}\}\mid 0\le i\le b-1,0\le j\le (\mu-2)/2\}.
\end{split}\end{equation}
Let $i\in G_n$. Since $-1\in\abr2_n$, the four elements, $\pm i,\pm 2i\pmod{n}$ belong to the same coset of $\abr 2_n$ in $G_n$. So, there is $j,\ 0\le j\le b-1$, such that $i\equiv \beta_j2^y$ and $-i\equiv \beta_j2^z$.  Since $-1$ is an odd power of 2 modulo $n$, the parities of $y$ and $z$ are different. Hence, either $\{i,2i\}\in A$ or $\{-i,-2i\pmod{n}\}\in A$. In either case, $\Delta A$ contains $i$, since either $2i-i\equiv i\pmod{n}$ or $-i-(-2i)\equiv i\pmod{n}$. Hence, $\Delta A$ contains all elements from $\underline{G}_n$, that is, $G_n\subset \Delta A$. On the other hand, $|\Delta A|\le |\underline{G}_n|$. Hence, $\Delta A=\underline{G}_n$.
\end{proof}

\begin{corollary}\label{one prime}For any  prime $n\in C_2$, there exists a cardioidal starter of order $n$.
\end{corollary}
\begin{proof}
By Lemma \ref{group of units}, $\underline{G}_n$ admits a cardioidal partition $A$ in $\ZZ_n^*$, such that $\Delta A$ comprises $\underline{G}_n$. Since $n$ is prime, $\underline{G}_n=\ZZ^*_n$. Hence, $A$ is a cardioidal starter of order $n$.
\end{proof}

\begin{ex}\label{starter 19,43}
\begin{enumerate}  Exapmles of constructing cardioidal starters of prime orders.
\item $n=19$.

As $2$ is a primitive root in $G_{19}$, we get a strong Skolem starter of order $19$ right away:\\
$\{\{1,2\},\{4,8\},\{16,13\},\{7,14\},\{9,18\},\{17,15\},\{11,3\},\{6,12\},\{5,10\}\}$,\\
and the corresponding Skolem sequence is 
$(1,1,8,4,5,6,7,4,9,5,8,6,3,7,2,3,2,9)$.
\item $n=43$. $($see Figure \ref {starter 43}$)$

Here, $2$ is not a primitive root in $G_{43}$, so we construct a starter by taking the pairs from $\langle 2\rangle_{43}$, $(|\langle 2\rangle_{43}|=14)$,  and from the two its cosets: $\ZZ^*_{43}=\langle \underline{2}\rangle_{43}\cup3\langle \underline{2}\rangle_{43}\cup13\langle \underline{2}\rangle_{43}$ .\\
The pairs from $\langle \underline{2}\rangle_{43}:\ \{1,2\},\{4,8\},\{16,32\},\{21,42\},\{41,39\},\{35,27\},\{11,22\}$;\\
the pairs from $3\langle \underline{2}\rangle_{43}:\ \{3,6\},\{12,24\},\{5,10\},\{20,40\},\{37,31\},\{19,38\},\{33,23\}$;\\
the pairs from $13\langle \underline{2}\rangle_{43}:\ \{13,26\},\{9,18\},\{36,29\},\{15,30\},\{17,34\},\{25,7\},\{14,28\}$.

The corresponding Skolem sequence is:  $(1,1,3,4,5,3,18,4,9,5,11,12,13,14,15,16,17,\\9,19,20,21,11,10,12,18,13,8,14,7,15,6,16,10,17,8,7,6,19,2,20,2,21)$.
\item $n=281$. This is the smallest $n\in P_2$. See the list of the pairs of a cardioidal starter of order $281$ in Appendix \rm{B}.
\end{enumerate}
\end{ex}

The following theorem gives the necessary and sufficient conditions of the existence of a cardioidal starter of order $n$ and presents an effective way to construct it using Lemma \ref{group of units}.
\begin{theorem}\label{major theorem} A cardioidal starter of order $n$ exists iff $n\in\overline{C_2}$.

\end{theorem}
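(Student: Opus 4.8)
The plan is to split the equivalence into sufficiency ($n\in\overline{C_2}$ forces existence) and necessity (existence forces $n\in\overline{C_2}$), and in both directions to reduce the global problem on $\ZZ_n^*$ to problems on the unit groups $G_m$ of the divisors $m>1$ of $n$, where Lemma \ref{group of units} and the characterization of $C_2$ can be applied.

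For sufficiency I would stratify $\ZZ_n^*$ by the value $d=\gcd(a,n)$. For each divisor $m>1$ of $n$ set $d=n/m$ and $L_m=\{a\in\ZZ_n^*:\gcd(a,n)=d\}$; these layers partition $\ZZ_n^*$, and the scaling map $\phi_m\colon\underline{G}_m\to L_m$, $u\mapsto du\pmod n$, is a bijection commuting with multiplication by $2$, since $d(2u\bmod m)\equiv 2du\pmod n$. Hence $\phi_m$ carries a cardioidal pair $\{u,2u\}\pmod m$ to a cardioidal pair $\{du,2du\}\pmod n$ and rescales differences by $d$. Because every such $m$ divides $n\in\overline{C_2}$, it again lies in $\overline{C_2}$, so Lemma \ref{group of units} hands me a cardioidal partition $A_m$ of $\underline G_m$ with $\Delta A_m=\underline G_m$; pushing forward gives a cardioidal partition of $L_m$ whose difference set is $d\,\underline G_m=L_m$. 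Unioning over all divisors $m>1$ then produces a cardioidal partition $A$ of $\ZZ_n^*$ with $\Delta A=\bigcup_m L_m=\ZZ_n^*$, i.e. a cardioidal starter.

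For necessity I would first reformulate the object. Writing each pair as $\{i,2i\}$ and letting $T$ be the set of the initial elements $i$, the partition property gives $T\sqcup 2T=\ZZ_n^*$, while the starter condition $\Delta A=\{\pm i:i\in T\}=\ZZ_n^*$ says that $T$ is a transversal of the pairs $\{x,-x\}$, so $T\sqcup(-T)=\ZZ_n^*$. Since $2T$ and $-T$ are both the complement of $T$, this forces $2T=-T$, equivalently $(-2)T=T$: the set $T$ must be a union of orbits of multiplication by $-2$ on $\ZZ_n^*$. Now assume $n\notin\overline{C_2}$ and pick a prime $p\mid n$ with $p\notin C_2$. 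Taking $x=n/p$, which has $\gcd(x,n)=n/p$ so that its associated modulus is exactly $p$, the element $-x$ lies in the $\langle -2\rangle$-orbit of $x$ iff $-1\in\langle -2\rangle_p$. I would verify this always holds for $p\notin C_2$: if $\ord_p(2)$ is odd then $(-2)^{\ord_p(2)}\equiv-1$, and if $\ord_p(2)\equiv0\pmod4$ then $-1=2^{\ord_p(2)/2}=(-2)^{\ord_p(2)/2}$; the residue class $\ord_p(2)\equiv2\pmod4$ is excluded precisely because $p\notin C_2$. Thus $x$ and $-x$ share a $\langle -2\rangle$-orbit, so any $\langle -2\rangle$-invariant $T$ must contain both or neither, contradicting the transversal property, and no cardioidal starter can exist.

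The main obstacle, and the real content, is the necessity direction: one must see that the two a priori independent constraints (being a partition of $\ZZ_n^*$ into cardioidal pairs, and having the differences fill $\ZZ_n^*$) collapse into the single algebraic condition $(-2)T=T$, and then locate the obstruction in one bad prime factor through the behaviour of $-1$ inside $\langle -2\rangle_p$. Sufficiency is comparatively routine once the layer decomposition is in place; the only point requiring care is checking that the scaling maps $\phi_m$ respect both the cardioidal structure and the difference sets, so that Lemma \ref{group of units} may be applied layer by layer and the pieces assembled.
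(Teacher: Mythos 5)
Your proposal is correct, and while your sufficiency argument coincides with the paper's, your necessity argument takes a genuinely different route. For sufficiency, both you and the paper stratify $\ZZ_n^*$ by $\gcd$ with $n$ and apply Lemma \ref{group of units} layer by layer; your only addition is the explicit check that the scaling bijections respect the cardioidal pairs and differences, which the paper leaves implicit. For necessity, the paper first reduces to a prime $p\notin C_2$ dividing $n$ (via the layer $V=(n/p)\ZZ_p^*$, whose internal differences can only come from pairs inside $V$, so a cardioidal starter of order $n$ yields one of order $p$), and then splits into two cases: for $p\equiv 5,7\pmod 8$ it invokes the external theorem that Skolem sequences of order $q\equiv 2,3\pmod 4$ do not exist, and for $p\equiv 1\pmod 8$ it runs a parity argument on the two possible cardioidal partitions of $\abr{\underline{2}}_p$. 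You instead extract from the two defining properties the single algebraic constraint $2T=-T$, i.e.\ $(-2)T=T$, where $T$ is the set of initial elements of the pairs, so that $T$ must be a union of $\langle -2\rangle$-orbits while also being a transversal of $\{x,-x\}$; the verification that $-1\in\langle -2\rangle_p$ whenever $\ord_p(2)$ is odd or divisible by $4$ (exactly the complement of $C_2$) then kills both of the paper's cases at once, using the element $x=n/p$. Your route is more self-contained (no appeal to the Skolem-sequence existence criterion) and unifies the case analysis; the paper's route has the side benefit of isolating the reduction ``cardioidal starter of order $n$ implies cardioidal starter of order $p$ for each prime $p\mid n$,'' which is of some independent interest. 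It is also worth noting that your invariance criterion is sharp: for $p\in C_2$ one checks $-1\notin\langle -2\rangle_p$, so your method recovers the full equivalence, not just the direction needed here.
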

\begin{proof} Let $n\in\overline{C_2}$. Then $n=\prod_{i=1}^m p_i^{k_i}$, where $p_i\in C_2,\ i=1,...,m$, are pairwise distinct, and $k_i\in\NN,\ i=1,...,m$.

Let $B=\{b_i\}_{i=0}^t$ be the set of the divisors of $n$, where $1=b_0<b_1<...<b_t=n$ and $t=\prod_{i=1}^m (k_i+1)-1$. Consider the sets $a_i\underline{G}_{b_i}=\{x\in\ZZ_n^*\mid\gcd(x,n)=a_i\}$, where $a_ib_i=n,\ i=1,...,t$. Since every element $x\in\mathbb{Z}^*_n$ lies in one and only one of these sets, the collection $\{a_i\underline{G}_{b_i}\}_{i=1}^t$ forms a partition of $\mathbb{Z}_n^*$ into $t$ subsets. 

By Lemma \ref{group of units}, every set $\underline{G}_{b_i}$ admits a cardioidal partition in $\ZZ^*_{b_i},\ i=1,...,t$. Hence, every set $a_i\underline{G}_{b_i}$ admits a cardioidal partition $A_i$ in $\ZZ^*_n,\ i=1,...,t$. By Lemma \ref{group of units}, each $A_i$ generates the set $\Delta A_i=a_i\underline{G}_{b_i},\ i=1,...,t$, so that $\bigcup_{i=1}^t \Delta A_i=\ZZ_n^*$. Hence, $\bigcup_{i=1}^t  A_i$ is a cardioidal starter of order $n$. 

\smallskip

To prove the converse, let a prime $p\notin C_2$ divide $n$. Then denote $N_p=n/p$ and consider the set $V=N_p\ZZ^*_p\subset \ZZ^*_n$.  Clearly, no pair of type $\{i,2i\pmod{n}\},\ i\notin V$, generates a difference which is an element of $V$. Hence, if there exists a cardioidal starter of order $n$ then $V$ admits a cardioidal partition $A$  in $\ZZ_n^*$ such that $\Delta A=V$. Equivalently, the existence of a cardioidal starter of order $n$ implies the existence of a cardioidal starter of order $p$. Consider two cases:
\begin{enumerate}
\item $p\equiv5$ or $7\pmod{8}$. A cardioidal starter of order $p$ gives rise to a Skolem sequence of order $(p-1)/2\equiv 2$ or $3\pmod{4}$ which is impossible \cite{b09}.
\item  $p\equiv1\pmod{8}$. Suppose, by way of contradiction, that there exists a cardioidal starter of order $p$. If a pair $\{x,y\}$ is cardioidal  in $\ZZ_p^*$, then $x$ and $y$ belong to the same coset of $\langle \underline{2}\rangle_p$. Hence, the existence of a cardioidal partition of $\ZZ^*_p$ implies a cardioidal partition of $\abr 2_p$  in $\ZZ_p^*$. In the case of odd $\ord_p(2)$, it is impossible. Then suppose that $\ord_p(2)$ is even. Since $p\notin C_2$ then $\ord_p(2)\equiv0\pmod{4}$. In this case, $-1\in\langle \underline{2}\rangle_p$, and hence, no element of $\langle \underline{2}\rangle_p$ can be a difference of a pair $\{i,2i\pmod{p}\},\ i\notin \langle \underline{2}\rangle_p$. Hence, the cardioidal starter of order $p$ must contain such a partition $A$ of $\langle \underline{2}\rangle_p$ that $\Delta A=\langle \underline{2}\rangle_p$.

The only two possible cardioidal partitions of $\langle\underline{2}\rangle_p$  in $\ZZ_p^*$ are: $A_1=\{\{2^k,2^{k+1}\}\mid k\ \rm{even}\}$ and $A_2=\{\{2^k,2^{k+1}\}\mid k\ \rm{odd}\}$. Since $-1$ is an even power of $2$ in $G_p$, we conclude that $\Delta A_1$ consists of only even powers of 2 in $\abr 2_p$, and $\Delta A_2$ consists of only odd powers of 2. That means that neither $\Delta A_1$ nor $\Delta A_2$ comprise $\langle \underline{2}\rangle_p$. The obtained contradiction completes the proof of non-existence of a cardioidal starter of order $p$, prime $p\notin C_2$.
\end{enumerate}
Hence, if a prime $p\notin C_2$ divides $n$, a cardioidal starter of order $n$ does not exist.
\end{proof}

\begin{ex}The partition technique proposed in Theorem \ref{major theorem}.
\begin{enumerate}
\item n=$11^4$. \\
$\mathbb{Z}_{11^4}^*=11^3\underline{G}_{11}\cup11^2\underline{G}_{11^2}\cup11\underline{G}_{11^3}\cup \underline{G}_{11^4}$ .
\item n=$11\cdot19\cdot43$.\\
$\mathbb{Z}_{11\cdot19\cdot43}^*=11\cdot19\underline{G}_{43}\cup11\cdot43\underline{G}_{19}\cup19\cdot43\underline{G}_{11}\cup11\underline{G}_{19\cdot43}\cup19\underline{G}_{11\cdot43}\cup43\underline{G}_{11\cdot19}\cup \underline{G}_{11\cdot19\cdot43}$ .
\end{enumerate}
\end{ex}
Finally, we give the necessary and sufficient conditions for the existence of strong cardioidal starters. 
\begin{theorem}\label{final theorem}A strong cardioidal starter of order $n$ exists iff $n\in\overline{C_2\setminus\{3\}}$.

\end{theorem}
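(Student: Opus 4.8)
The plan is to obtain this statement as a clean corollary of the two results already in place, namely Theorem \ref{major theorem} (a cardioidal starter of order $n$ exists iff $n\in\overline{C_2}$) and Lemma \ref{basic statement} (a cardioidal starter of order $n$ is strong iff $3\nmid n$). The one preliminary fact I would record is that $3\in C_2$: since $2\equiv -1\pmod 3$ we have $\ord_3(2)=2\equiv 2\pmod 4$ (equivalently $3\equiv 3\pmod 8$, which places $3$ in $C_2$ by the earlier classification lemma). Thus deleting $3$ from $C_2$ is meaningful, and $\overline{C_2\setminus\{3\}}$ is a genuine proper subset of $\overline{C_2}$.

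The key bookkeeping step is the set identity
\[
\overline{C_2\setminus\{3\}}=\{\,n\in\overline{C_2}\mid 3\nmid n\,\}.
\]
This follows from unique factorization: an element of the multiplicative closure of a set of primes is precisely a product of primes drawn from that set, so $n\in\overline{C_2}$ means every prime factor of $n$ lies in $C_2$. Imposing in addition $3\nmid n$ is exactly the requirement that every prime factor lie in $C_2\setminus\{3\}$, i.e. $n\in\overline{C_2\setminus\{3\}}$. I would check both inclusions briefly: the forward one is immediate since $C_2\setminus\{3\}\subset C_2$, while the reverse uses that $3\nmid n$ eliminates $3$ as a possible prime factor.

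With the identity in hand, both directions are short. For existence, assume $n\in\overline{C_2\setminus\{3\}}$; then $n\in\overline{C_2}$, so Theorem \ref{major theorem} produces a cardioidal starter of order $n$, and since $3\nmid n$, Lemma \ref{basic statement} shows it is strong. For necessity, assume a strong cardioidal starter of order $n$ exists: being cardioidal forces $n\in\overline{C_2}$ by Theorem \ref{major theorem}, and being strong forces $3\nmid n$ by the contrapositive of Lemma \ref{basic statement}; combining these through the set identity yields $n\in\overline{C_2\setminus\{3\}}$.

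There is no genuine obstacle here, as the substance of the theorem has been front-loaded into Theorem \ref{major theorem} and Lemma \ref{basic statement}. The only point I would take care not to gloss over is the set identity for the multiplicative closure — in particular confirming that $\overline{C_2}$ consists exactly of those integers all of whose prime factors lie in $C_2$ — since the whole statement hinges on translating the arithmetic condition ``$3\nmid n$'' into the closure-theoretic condition ``$n\in\overline{C_2\setminus\{3\}}$''.
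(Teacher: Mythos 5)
Your proof is correct and takes essentially the same route as the paper: both directions follow by combining Theorem \ref{major theorem} with Lemma \ref{basic statement}, with the paper simply asserting the identification $n\in\overline{C_2\setminus\{3\}}$ ``that is, $3\nmid n,\ n\in\overline{C_2}$'' where you justify it via unique factorization. Your extra care over the closure identity and the check that $3\in C_2$ are harmless additional detail, not a different argument.
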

\begin{proof} Let $n\in\overline{C_2\setminus\{3\}}$, that is, $3\nmid n,\ n\in\overline{C_2}$. By Theorem \ref{major theorem}, there exists a cardioidal starter of order $n$. By Lemma \ref{basic statement}, it is strong\footnote{by Theorem \ref{skew}, it is skew.}.

Conversely, if $n\notin\overline{C_2}$, then a cardioidal starter of order $n$ does not exist by Theorem \ref{major theorem}.  If $3|n,\ n\in\overline{C_2}$, then a cardioidal starter of order $n$ is not strong by Lemma \ref{basic statement}. 
\end{proof}
\begin{ex}The construction of strong cardioidal starters of composite orders in details.
\begin{enumerate}
\item $n=121=11^2. \\ \mathbb{Z}^*_{121}=11\underline{G}_{11}\cup \underline{G}_{121}.$\\
\noindent The pairs from $11\underline{G}_{11}:\ \{11,22\},\{44,88\},\{55,110\},\{99,77\},\{33,66\}$;\\
The pairs from $\langle \underline{2}\rangle_{121}= \underline{G}_{121}:\ \{1,2\},\{4,8\},\{16,32\},\{64,7\},\{14,28\},\{56,112\},\\ 
\{103,85\},\{49,98\},\{75,29\},\{58,116\},\{111,101\},\{81,41\},\{82,43\},\{86,51\},\{102,83\},\\ \{45,90\},\{59,118\},
\{115,109\},\{97,73\},\{25,50\},\{100,79\},\{37,74\},\{27,54\},\{108,95\},\\
\{34,68\},\{15,30\},\{60,120\},\{119,117\},\{113,105\},\{89,57\},\{114,107\},\{93,65\},\\
\{69,17\},\{9,18\},\{36,72\},\{23,46\},\{92,63\},\{5,10\},\{20,40\},\{80,39\},\{78,35\},\{70,19\},\\
\{38,76\},\{31,62\},\{3,6\},\{12,24\},\{48,96\},\{71,21\},\{42,84\},\{47,94\},\{67,13\},\{26,52\},\\
\{104,87\},\{53,106\},\{81,61\}.$

\item $n=209=11\cdot19.\\ \mathbb{Z}^*_{209}=11\underline{G}_{19}\cup19\underline{G}_{11}\cup\underline{G}_{209};\ \underline{G}_{209}=\langle \underline{2}\rangle_{209}\cup3\langle \underline{2}\rangle_{209}.$\\
The pairs from $11\underline{G}_{19}:\ \{11,22\},\{44,88\},\{176,143\},\{77,154\},\{99,198\},\{187,165\},\\
\{121,33\},\{66,132\},\{55,110\}$;\\
The pairs from $19\underline{G}_{11}:\ \{19,38\},\{76,152\},\{95,190\},\{171,133\},\{57,114\}$;\\
The pairs from $\langle \underline{2}\rangle_{209}:\ \{1,2\},\{4,8\},\{16,32\},\{64,128\},\{47,94\},\{188,167\},\{125,41\},\\
\{82,164\},\{119,29\},\{58,116\},\{23,46\},\{92,184\},\{159,109\},\{9,18\},\{36,72\},\{144,79\},\\
\{158,107\},\{5,10\},\{20,40\},\{80,160\},\{111,13\},\{26,52\},\{104,208\},\{207,205\},\\
\{201,193\},\{177,145\},\{81,162\},\{115,21\},\{42,84\},\{168,127\},\{45,90\},\{180,151\},\\
\{93,186\},\{163,117\},\{25,50\},\{100,200\},\{191,173\},\{137,65\},\{130,51\},\{102,204\},\\
\{199,189\},\{169,129\},\{49,98\},\{196,183\},\{157,105\}$;\\
 The pairs from $3\langle \underline{2}\rangle_{209}:\ \{3,6\},\{12,24\},\{48,96\},\{192,175\},\{141,73\},\{146,83\},\\ \{166,123\},\{37,74\},\{148,87\},\{174,139\},\{69,138\},\{67,134\},\{59,118\},\{27,54\},\\
\{108,7\},\{14,28\},\{56,112\},\{15,30\},\{60,120\},\{31,62\},\{124,39\},\{78,156\},\{103,206\},\\
\{203,197\},\{86,172\},\{185,161\},\{113,17\},\{34,68\},\{136,63\},\{126,43\},\{135,61\},\\
\{122,35\},\{70,140\},\{71,142\},\{75,150\},\{91,182\},\{155,101\},\{202,195\},\{181,153\},\\
\{97,194\},\{179,149\},\{89,178\},\{147,85\},\{170,131\},\{53,106\}.$
\end{enumerate}
\end{ex}
\section{Cardioidal starters of order $n$ when $3\mid n$}\label{order divisible by 3}
The construction presented in Theorem \ref{major theorem} and in Lemma \ref{group of units}, if $3|n,\ n\in\overline{C_2}$, produces a cardioidal $($and hence, Skolem$)$ starter of order $n$ which, by Lemma \ref{basic statement}, is not strong. However, it is a convenient way to generate Skolem sequences of order $(n-1)/2$.

\begin{ex}
Let us construct a cardioidal starter of order $n$ divisible by $3$.\\
 $n=33=3\cdot 11$.\\
$\ZZ^*_{33}=3\underline{G}_{11}\cup11\underline{G}_3\cup\underline{G}_{33}.\ \underline{G}_{33}=\langle \underline{2}\rangle_{33}\cup5\langle \underline{2}\rangle_{33}$.\\
The pairs from $3\underline{G}_{11}:\ \{3,6\},\{12,24\},\{15,30\},\{27,21\},\{9,18\};$\\
\noindent the pair from $11\underline{G}_3:\ \{11,22\};$\\
\noindent  the pairs from $\langle \underline{2}\rangle_{33}:\ \{1,2\},\{4,8\},\{16,32\},\{31,29\},\{25,17\}$;\\
\noindent  the pairs from $5\langle \underline{2}\rangle_{33}:\ \{5,10\},\{20,7\},\{14,28\},\{23,13\},\{26,19\}$.\\
The corresponding Skolem sequence is 
$(1,1,3,4,5,3,13,4,9,5,11,12,10,14,15,16,8,9,7,13,\\6,11,10,12,8,7,6,14,2,15,2,16)$.

The starter is cardioidal but not strong. In the set of the $16$ corresponding chords, the chord $[11,22]$ is horizontal; the other 15 chords split into five triples of parallel chords, for example, $[1,2]\parallel [23,13]\parallel[12,24]$ since $1+2\equiv23+13\equiv12+24\pmod{33}$. 
\end{ex}

As strong Skolem starters of orders other than mentioned in Theorem \ref{final theorem} can not be cardioidal, their construction should be based on different principles. The results of this paper may be applicable in constructing strong Skolem starters of order $3n$, using a strong cardioidal starter of order $n$. For instance, the problem of the existence of a strong Skolem starter of order $3n$, where $n\in\overline{C_2\setminus\{3\}}$, may be reduced to finding a Langford sequence\footnote{A Langford sequence of defect $d$ and length $m$, $\mathcal {L}^d_{m}$, is a generalized Skolem sequence with $2m$ symbols, where each $j\in\{d,d+1,...,d+m-1\}$ appears in exactly two positions, $a_j$ and $b_j$, so that $b_j-a_j=j$ \cite{b04}.} $\mathcal {L}^{(n+1)/2}_{n}$, which mathches to a Skolem sequence of order $(n-1)/2$ whose constraction follows from Theorem \ref{major theorem} and Lemma \ref{group of units}. By matching, we mean that a concatenation of the two sequences results in a Skolem sequence of order $(3n-1)/2$, producing a strong Skolem starter of order $3n$.
The following example will make it clear:
\begin{ex}
 Consider the Skolem sequence, generating a strong Skolem starter of order $33:\ (11,12,13,14,15,16,6,7,8,9,10,11,6,12,7,13,8,14,9,15,10,16,5,2,4,2,3,5,4,3,1,1)$. It can be noted that this sequence is obtained by concatenation of a Langford sequence $\mathcal {L}^6_{11}$ and a Skolem sequence of order 5, corresponding to a strong cardioidal starter of order $11:\\  (11,12,13,14,15,16,6,7,8,9,10,11,6,12,7,13,8,14,9,15,10,16)|(5,2,4,2,3,5,4,3,1,1)$.
\end{ex}
The question of the existence and effective construction of a Langford sequence matching a given Skolem sequence, generated by a strong cardioidal starter, is an open problem.
\section{Conclusions and further research}\label{conclusions}
In this paper, we introduced and studied new combinatorial objects, namely, strong cardioidal starters in order to address the question of the existence of strong Skolem starters, the former comprising a proper subset of the latter. While this question remains open, we managed to partly confirm Shalaby's Conjecture stated in 1991, by explicitly constructing infinite families of strong cardioidal starters. Theorem \ref{final theorem} fully describes the infinite subset of natural numbers admitting strong cardioidal (and hence, strong Skolem) starters of the corresponding orders. Due to our discovery of infinite families of strong Skolem starters, there arise further questions over the existence of strong starters, generated by extended Skolem sequences (Rosa, hooked Skolem and other Skolem-type sequences \cite{b014}).

In addition, in the proofs of Theorem \ref{major theorem} and Lemma \ref{group of units}, we proposed a new way to generate cardioidal (but not necessarily strong) starters for all possible orders. These starters give rise to corresponding Skolem sequences, which are by themselves valuable objects in the theory of mathematical design and its applications, for example, in construction of constant-weight design codes \cite{b03} and various Skolem-type rectangles \cite{b017}. \\

\chapter{\Large\bf{Appendix A}}\label{appendix A}

We present below examples of Skolem sequences of orders $q< 30$ that yeild strong Skolem starters of orders $2q+1$. All the sequences, except the last one, were constructed in \cite{b09}. \\
$q=5$\\
$(5,2,4,2,3,5,4,3,1,1)$;\\
$q=8$\\
$(5,6,7,8,2,5,3,6,4,7,3,8,4,3,1,1)$;\\
$q=9$\\
$(8,9,3,4,7,3,6,4,8,5,9,7,6,2,5,2,1,1)$;\\
$q=12$\\
$(4,5,11,8,4,10,5,7,9,12,2,8,2,11,7,10,6,9,1,1,3,12,6,3)$;\\
$q=13$\\
$(8,9,10,11,12,13,7,4,8,6,9,4,10,7,11,6,12,5,13,2,3,2,5,3,1,1)$;\\
$q=16$\\
$(11,12,13,14,15,16,6,7,8,9,10,11,6,12,7,13,8,14,9,15,10,16,5,2,4,2,3,5,4,3,1,1)$;\\
$q=17$\\
$(11,12,13,14,15,16,17,2,6,2,8,11,10,12,6,13,9,14,8,15,7,16,10,17,5,9,4,7,3,5,4,3,1,1)$;\\
$q=20$\\
$(13,14,15,16,17,18,19,20,8,3,12,7,3,13,10,14,8,15,7,16,11,17,12,18,10,19,6,20,5,9,4,\\11,6,5,4,1,1,2,9,2)$;\\
$q=21$\\
$(13,14,15,16,17,18,19,20,21,11,12,5,6,13,10,14,5,15,6,16,11,17,12,18,10,19,9,20,8,21,\\3,7,2,3,2,9,8,4,7,1,1,4)$;\\
$q=24$\\
$(16,17,18,19,20,21,22,23,24,6,7,8,11,12,15,6,16,7,17,8,18,14,19,11,20,12,21,10,22,\\15,23,5,24,9,13,14,5,10,2,4,2,3,9,4,3,1,1,13)$;\\
$q=25$\\
$(16,17,18,19,20,21,22,23,24,5,9,25,3,10,5,3,16,4,17,9,18,4,18,10,20,15,21,8,22,14,\\23,7,24,11,13,8,25,12,7,6,15,1,1,14,11,6,2,16,2,12)$;\\
$q=28$\\
$(18,19,20,21,22,23,24,25,26,27,28,6,7,3,15,8,3,6,18,7,19,16,20,8,21,17,22,14,23,15,\\24,9,25,13,26,12,27,16,28,11,9,14,17,5,10,4,13,12,5,4,11,1,1,2,10,2)$;\\
$q=29$\\
$(1,1,3,4,5,3,7,4,9,5,24,12,23,7,15,16,17,9,19,20,21,22,18,12,25,26,27,28,29,15,14,\\16,13,17,24,23,11,19,10,20,18,21,8,22,14,13,6,11,10,25,8,26,6,27,2,28,2,29)$.\\

\chapter{\Large\bf{Appendix B}}\label{appendix B}

A strong cardioidal starter of order 281. $\underline{G}_{281}=\langle \underline{2}\rangle_{281}\cup3\langle \underline{2}\rangle_{281}\cup5\langle \underline{2}\rangle_{281}\cup15\langle \underline{2}\rangle_{281}$.\\
 The pairs from $\langle \underline{2}\rangle_{281}:$\\
$\{1,2\},\{4,8\},\{16,32\},\{64,128\},\{256,231\},\{181,81\},\{162,43\},\{86,172\},\{63,126\},\\
\{252,223\},\{165,49\},\{98,196\},\{111,222\},\{163,45\},\{90,180\},\{79,158\},\{35,70\},\\
\{140,280\},\{279,277\},\{271,265\},\{249,217\},\{153,25\},\{50,100\},\{200,119\},\{238,195\},\\
\{109,218\},\{155,29\},\{58,116\},\{232,183\},\{85,170\},\{59,118\},\{236,191\},\{101,202\},\\
\{123,246\},\{211,141\}$;\\
the pairs from $3\langle \underline{2}\rangle_{281}:$\\
$\{3,6\},\{12,24\},\{48,96\},\{192,103\},\{206,131\},\{262,243\},\{205,129\},\{258,235\},\{189,97\},\\
\{194,107\},\{214,147\},\{13,26\},\{52,104\},\{208,135\},\{270,258\},\{237,193\},\{105,210\},\\
\{139,278\},\{275,269\},\{257,233\},\{185,89\},\{178,75\},\{150,19\},\{38,76\},\{152,23\},\\
\{46,92\},\{184,87\},\{174,67\},\{134,268\},\{255,229\},\{177,73\},\{146,11\},\{22,44\},\{88,176\},\\
\{71,142\}$;\\
the pairs from $5\langle\underline{2}\rangle_{281}:$\\
$\{5,10\},\{20,40\},\{80,160\},\{39,78\},\{156,31\},\{62,124\},\{248,215\},\{149,17\},\{34,68\},\\
\{136,272\},\{263,245\},\{209,137\},\{274,267\},\{253,225\},\{169,57\},\{114,228\},\{175,69\},\\
\{138,276\},\{271,261\},\{241,201\},\{121,242\},\{203,125\},\{250,219\},\{157,33\},\{66,132\},\\
\{264,247\},\{213,145\},\{9,18\},\{36,72\},\{144,7\},\{14,28\},\{56,112\},\{224,167\},\{53,106\},\\
\{212,143\}$;\\
the pairs from $15\langle\underline{2}\rangle_{281}:$\\
$\{15,30\},\{60,120\},\{240,199\},\{117,234\},\{187,93\},\{186,91\},\{182,83\},\{166,51\},\\
\{102,204\},\{127,254\},\{227,173\},\{65,130\},\{260,239\},\{197,113\},\{226,171\},\{61,122\},\\
\{244,207\},\{133,266\},\{251,221\},\{161,41\},\{82,164\},\{47,94\},\{188,95\},\{190,99\},\\
\{198,115\},\{230,179\},\{77,154\},\{27,54\},\{108,216\},\{151,21\},\{42,84\},\{168,55\},\\
\{110,220\},\{159,37\},\{74,148\}.$

\end{document}